\newtheorem{theorem}{Theorem}
\newtheorem{lemma}{Lemma}
\newtheorem{proposition}{Proposition}
\newtheorem{cor}{Corollary}
\newtheorem{prob}{Problem}
\newtheorem*{rmk*}{Remark}
\DeclareMathOperator{\sgn}{sgn}
\ifodd\value{page}
  {\small BENJAMIN BEDERT}
\title{On the zeros of reciprocal Littlewood polynomials}
\author{Benjamin Bedert\\
\tiny Mathematical Institute, University of Oxford}
\thanks{benjamin.bedert@maths.ox.ac.uk\\The author gratefully acknowledges financial support from the EPSRC}
\begin{document}
\maketitle
\begin{abstract}
Let $P(z)=\sum_{n=0}^Na_nz^n$ be a Littlewood polynomial of degree $N$, meaning that $a_n\in\{\pm 1\}$. We say that $P$ is reciprocal if $P(z)=z^NP(1/z)$. Borwein, Erd\'elyi and Littmann posed the question of determining the minimum number $Z_{\mathcal{L}}(N)$ of zeros of modulus 1 of a reciprocal Littlewood polynomial $P$ of degree $N$. Several finite lower bounds on $Z_{\mathcal{L}}(N)$ have been obtained in the literature, and it has been conjectured by various authors that $Z_{\mathcal{L}}(N)$ must in fact grow to infinity with $N$. Starting from ideas in recent breakthrough papers of Erd\'elyi and Sahasrabudhe, we are able to confirm this.
\end{abstract}

\tableofcontents
\section{Introduction}
Littlewood in his 1968 monograph “Some Problems in Real and Complex
Analysis” \cite[Problem 22]{littlewoodproblems} posed the following problem:
\begin{prob} \label{littprob} If the $n_j$ are distinct integers, what is the lower bound on the number of real zeros of $\sum^N_{j=1} \cos n_j\theta$ in a period $[0,2\pi]$? Possibly $N-1$, or not much
less.
\end{prob}
No progress was made on this problem until the breakthrough of Borwein, Erd\'elyi, Ferguson and Lockhart \cite{borwein} who demonstrated the existence of cosine polynomials $\sum^N_{j=1} \cos n_j\theta$ with $O(N^{5/6}\log N)$ roots in a period, thus giving a counterexample to Littlewood's proposed lower bound. Their construction was later used independently by Konyagin \cite{konyaginzeros} and Ju\v skevi\v cius and Sahasrabudhe \cite{juskevicius} to obtain the improved upper bound $O((N\log N)^{2/3})$. Obtaining lower bounds for the number of zeros of 
\pagebreak

\begin{flushleft} cosine polynomials of the form
\end{flushleft}
\begin{equation}
    f(\theta)=\sum_{j=1}^N \cos n_j\theta
\label{1cosinepoly}
\end{equation}
with the $n_j$ being distinct non-negative integers seems to be a hard problem; in fact it was only recently proved that the number of such zeros grows to infinity as $N\to \infty$. This was proved independently by Erd\'elyi \cite{erdelyioriginal}, and by Sahasrabudhe \cite{sahasrabudhe} who further obtained the explicit lower bound $(\log\log\log N)^{1/2-o(1)}$ for the number of zeros of \eqref{1cosinepoly}. Erd\'elyi \cite{erdelyi} combined the arguments of \cite{erdelyioriginal} and \cite{sahasrabudhe} to obtain the slightly better exponent $1-o(1)$ replacing $1/2-o(1)$ in Sahasrabudhe's bound. Let us write $Z(f)$ for the number of zeros of the cosine polynomial $f$ in $[0,2\pi]$. Then the current best bounds for Problem \ref{littprob} are \begin{equation}
    (\log \log \log N)^{1-o(1)}\leqslant \min_f Z(f)\leqslant (N\log N)^{2/3},
\label{Zbounds}
\end{equation}
where the minimum is taken over all cosine polynomials of the form \eqref{1cosinepoly} with $N$ terms, and it is an interesting open problem problem to determine the true order of magnitude. 
The methods of \cite{erdelyioriginal,erdelyi,sahasrabudhe} further prove lower bounds on the number of zeros of a more general class of cosine polynomials. Let $R$ be a finite set, let
\begin{equation}
    f(\theta) = \sum_{n=0}^N A_n \cos n\theta
\label{cosinepolygene}
\end{equation}
with coefficients $A_n\in R$ and let $Z_R(N)=\min_f Z(f)$ where the minimum is taken over all cosine polynomials $f$ of degree $N$ with coefficients in $R$, as in \eqref{cosinepolygene}. The following lower bound is established in \cite{erdelyi} (following similar theorems in \cite{erdelyioriginal,sahasrabudhe}).
\begin{theorem}[\cite{erdelyi}, Theorem 2.1] \label{erdetheo}
    Let $R\subset \mathbf{Z}$ be finite and $M(R)=\max_{r\in R} |r|$.
    Let $f$ be a cosine polynomial as in \eqref{cosinepolygene} with coefficients $A_n\in R$. Then the number of roots of $f$  satisfies
    \begin{equation*}
        Z(f)\geqslant \left(\frac{c}{1+\log M(R)}\right)\frac{\log\log\log |f(0)|}{\log\log\log\log |f(0)|}-1,
    \end{equation*}
    where $c>0$ is an absolute constant.
\end{theorem}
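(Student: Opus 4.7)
The plan is to prove the contrapositive: if $Z(f) = k$ is small, then $|f(0)|$ must be correspondingly small. I would follow the strategy pioneered in \cite{sahasrabudhe} and refined in \cite{erdelyioriginal, erdelyi}, which proceeds by converting the hypothesis on zeros into a sign-pattern constraint and then extracting coefficient information via iterated Fourier analysis.

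\emph{Step 1 (Sign structure from few zeros).} Since $f$ is continuous with only $k = Z(f)$ zeros in $[0, 2\pi]$, the interval decomposes into at most $k+1$ arcs on each of which $f$ has constant sign, so $\sigma := \sgn(f)$ is a $\pm 1$-valued step function with at most $k$ jumps. I would then construct a smooth trigonometric-polynomial approximant $\tau_D$ of $\sigma$ of some degree $D$, by convolving $\sigma$ with a Fej\'er-type kernel. This yields $\|\tau_D\|_\infty \leqslant 1$ and $\|\sigma - \tau_D\|_1 = O(k \log D / D)$, and feeds into an identity of the form $\sum_n A_n \hat\tau_D(n) = \int f\tau_D = \|f\|_1 - O(k \log D / D)\cdot \|f\|_\infty$, which trades the global information on $f$ for information on its low-frequency part.

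\emph{Step 2 (Iterated reduction of degree).} The heart of the argument, adapted from \cite{sahasrabudhe}, is a recursive lemma: given a trigonometric polynomial of degree $N'$ with coefficients in a finite set $R'$ and value at $0$ equal to $V'$, one produces a trigonometric polynomial of substantially smaller degree $N''$, with coefficients in a mildly enlarged set $R''$, and value at $0$ still comparable to $V'$. Iterating this reduction, with the step-1 identity providing the sign-based leverage that is ``consumed'' at each level, eventually drives the degree below a trivial threshold; at that point $V'$ is forced to be bounded, and back-tracking gives the quantitative bound on $|f(0)|$ in terms of the number of iterations performed, which in turn is controlled by $k$.

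\emph{Main obstacle.} The principal technical difficulty is optimizing the recursive step so as to produce the specific exponent $(\log\log\log|f(0)|)/(\log\log\log\log|f(0)|)$ rather than a weaker iterated-log bound. The coefficient range $R'$ grows with each iteration, and at each step one loses a factor depending on $\log M(R')$; this is why the factor $(1 + \log M(R))^{-1}$ appears in the conclusion. Combining the basic reduction scheme of \cite{sahasrabudhe} with the double-counting refinement of \cite{erdelyi} should be necessary in order to squeeze out the precise triple-log dependence while carefully balancing the degree decay against the blow-up of the coefficient range.
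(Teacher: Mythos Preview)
This theorem is not proved in the present paper. It is quoted verbatim from \cite{erdelyi} (Theorem~2.1 there) as background, and the paper under review offers no argument for it; the paper's own contribution begins only with the structural Lemma~\ref{structurelem} and the subsequent analysis of reciprocal Littlewood polynomials. So there is no ``paper's own proof'' against which to compare your proposal.

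That said, your sketch is a reasonable high-level description of the strategy actually used in \cite{sahasrabudhe,erdelyioriginal,erdelyi}: one does exploit the sign pattern induced by few zeros, approximate $\sgn(f)$ by a low-degree trigonometric polynomial, and then run an iterative degree-reduction scheme that trades degree against a controlled inflation of the coefficient set. The main vagueness in your write-up is Step~2: the ``recursive lemma'' you describe is not quite right as stated (one does not simply produce a shorter polynomial with comparable value at $0$; rather, the structural theorem you see later in this paper as Theorem~\ref{boundedcomplexity} is itself the output of that iteration, and the bound on $|f(0)|$ is then read off from the resulting periodic block decomposition). If you want to reconstruct the proof, you should consult \cite{erdelyi} directly rather than this paper.
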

Applying this theorem  with $R=\{0,1\}$ clearly yields the lower bound in \eqref{Zbounds}. One should note that for general sets of coefficients $R$, a lower bound as in Theorem \ref{erdetheo} has to depend on a slightly more subtle parameter like $|f(0)|$ rather than simply the degree $N$ of \eqref{cosinepolygene}. This is pointed out in \cite{erdelyioriginal} using the explicit functions
\begin{equation}
    g_N(\theta)= \cos \theta+\sum_{j=0}^N(-1)^j\cos((2j+1)\theta)
\label{fewrootspolys}
\end{equation}
which have no more than 2 roots in $[0,2\pi]$ for all $N$.\footnote{This is compatible with Theorem \ref{erdetheo} as the sequence $|g_N(0)|$ is bounded} Similarly, if the set of coefficients $R$ has both positive and negative numbers, then Theorem \ref{erdetheo} does not show that the number of zeros of \eqref{cosinepolygene} tends to infinity with the degree $N$ and hence it gives no lower bound on $Z_R(N)$. For a general finite set $R$, Borwein and Erd\'elyi \cite{borweinerdelyitrunc} proved that if $(a_n)\in R^\mathbf{N}$ is a fixed sequence with finitely many zero terms, then $\lim_{n\to \infty} Z(f_n)=\infty$ where $f_n(\theta)=\sum_{j=0}^n a_j\cos j\theta$ are truncations of a fixed series. Note however that this result also does not show that $Z_R(N)\to \infty$ as $N\to\infty$. In fact, such a result is not true for general $R$ as can been seen by taking $R=\{0,\pm1,2\}$ corresponding to \eqref{fewrootspolys}.
\smallskip

In this paper we consider the class of cosine polynomials with coefficients in $R=\{\pm1\}$ which is of special interest in the literature, and our goal will be to prove the conjecture that $Z_{\{\pm1\}}(N)\to \infty$ as $N\to\infty$. In this context, problems are usually formulated equivalently using reciprocal Littlewood polynomials.
Following the notation in \cite{erdelyi}, we denote the class of Littlewood polynomials of degree $N$ by
\begin{equation*}
    \mathcal{L}_N=\left\{P: P(z)=\sum_{n=0}^N \varepsilon_n z^n, \varepsilon_n\in\{\pm 1\}\right\}
\end{equation*}
and we say that a polynomial $P(z)=\sum_{n=0}^N a_n z^n$ is \textit{reciprocal} if $P(z)=z^NP(1/z)$, or equivalently $a_j=a_{N-j}$ for all $j$. Let us write $Z(P)=\left|\{\rho\in\mathbf{C}:|\rho|=1, P(\rho)=0\}\right|$ for the number of unimodular roots of $P$, and let
\begin{equation}
    Z_{\mathcal{L}}(N) = \min_{P\in \mathcal{L}_N \text{ reciprocal}} Z(P)
\label{miniLitt}
\end{equation}
be the minimum number of unimodular roots of a reciprocal Littlewood polynomial of degree $N$. Borwein, Erd\'elyi and Littmann \cite[Problem 1]{littmann} asked to determine $Z_{\mathcal{L}}(N)$. It was shown independently by Erd\'elyi \cite{erdelyioneroot} and Mercer \cite{mercer} that $Z_{\mathcal{L}}(N)\geqslant 1$ for $N\geqslant 1$.\footnote{Mercer also showed that `skew-reciprocal' Littlewood polynomials do not have any unimodular roots, justifying that the minimum in \eqref{miniLitt} is taken over reciprocal polynomials only.} Mukunda \cite{mukunda} improved this result by proving that $Z_{\mathcal{L}}(N)\geqslant 3$ for odd degrees $N\geqslant 3$. The current best bounds are due to Drungilas \cite{drungilas} who proved that $Z_{\mathcal{L}}(N)\geqslant 5$ for odd $N\geqslant 7$ and $Z_{\mathcal{L}}(N)\geqslant 4$ for even $N\geqslant 14$. It was also shown in \cite{borwein} that reciprocal polynomials in $\mathcal{L}_N$ have at least $n/4$ roots on average. Erd\'elyi \cite{erdelyi14.18, erdelyioriginal,erdelyi}, Mukunda \cite{mukunda} and Drungilas \cite{drungilas} state the conjecture that \begin{equation}
    \lim_{N\to \infty} Z_{\mathcal{L}}(N) = \infty.
\label{conjecture}    
\end{equation} Our main result confirms this conjecture.

\begin{theorem} \label{maintheorem}
    We have that $Z_{\mathcal{L}}(N)\to \infty$ as $N\to \infty$.
\end{theorem}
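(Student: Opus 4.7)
My plan is to argue by contradiction and exploit Theorem \ref{erdetheo}. Suppose $Z_{\mathcal{L}}(N)\not\to\infty$; then there exist $K\in\mathbf{N}$ and an infinite set $\mathcal{N}\subset\mathbf{N}$ such that for each $N\in\mathcal{N}$ one can choose a reciprocal polynomial $P_N\in\mathcal{L}_N$ with $Z(P_N)\leqslant K$. I aim to derive a contradiction.

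The first step is the standard reduction from a reciprocal Littlewood polynomial to a real cosine polynomial. For $N=2M$, the symmetry $a_k=a_{N-k}$ gives
\begin{equation*}
f_N(\theta):=e^{-iM\theta}P_N(e^{i\theta})=a_M+2\sum_{k=1}^M a_{M-k}\cos(k\theta),
\end{equation*}
a real cosine polynomial with coefficients in $\{\pm 1,\pm 2\}$ whose zeros on $[0,2\pi)$ are in bijection with the unimodular zeros of $P_N$; in particular $Z(f_N)=Z(P_N)$ and $f_N(0)=P_N(1)$. A parallel reduction (after the substitution $\theta=2\phi$ to clear the half-integer frequencies that arise) handles odd $N$. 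Theorem \ref{erdetheo} applied with $R=\{\pm 1,\pm 2\}$, so $M(R)=2$, then yields
\begin{equation*}
K\;\geqslant\; Z(P_N)\;\geqslant\; c\cdot\frac{\log\log\log|P_N(1)|}{\log\log\log\log|P_N(1)|}-1,
\end{equation*}
which immediately contradicts our assumption along any subsequence on which $|P_N(1)|\to\infty$. Passing to a subsequence, we may therefore assume that $|P_N(1)|$ is bounded by some fixed integer $b$.

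The heart of the matter is ruling out the remaining regime, in which both $|P_N(1)|\leqslant b$ and $Z(P_N)\leqslant K$ while $N\to\infty$. My plan is to \emph{amplify} $P_N(1)$ by passing to $Q_N:=P_NR_N$ for a suitable auxiliary reciprocal polynomial $R_N$, so that $Q_N(1)=P_N(1)R_N(1)$ grows with $N$ and Theorem \ref{erdetheo} applied to the cosine polynomial associated with $Q_N$ forces $Z(Q_N)\to\infty$, contradicting the bound $Z(Q_N)\leqslant Z(P_N)+Z(R_N)$. Since the coefficients of $Q_N$ have size at most $(\deg R_N+1)\|R_N\|_\infty$, we are looking for a family $R_N$ in which $|R_N(1)|$ grows rapidly while $Z(R_N)$, $\deg R_N$ and $\|R_N\|_\infty$ all grow slowly; natural first candidates are the partial sums $1+z+\cdots+z^{m-1}$, cyclotomic polynomials $\Phi_p$, and products of these. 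A more refined alternative route would instead exploit arithmetic-progression restrictions of the coefficient sequence $(\varepsilon_n)$, using the Parseval bound $\frac{1}{q}\sum_{\omega^q=1}|P_N(\omega)|^2=\sum_{r=0}^{q-1}\bigl(\sum_{n\equiv r\,(q)}\varepsilon_n\bigr)^2\asymp N$ to locate a residue $r$ modulo a well-chosen $q$ on which the partial sum $\sum_{n\equiv r\,(q)}\varepsilon_n$ is of order $\sqrt{N/q}$, and build an auxiliary cosine polynomial supported on this progression.

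I expect the principal obstacle to be precisely this amplification step. The bound in Theorem \ref{erdetheo} is only triple-logarithmic in $|f(0)|$ and is further damped by the factor $(1+\log M(R))^{-1}$, so a naive multiplicative amplifier typically introduces too many new unimodular zeros and enlarges the coefficients too much to give any improvement on $K$. Getting past this seems to require exploiting the rigid structure of reciprocal Littlewood polynomials---in particular, that every non-constant Fourier coefficient of $f_N$ has magnitude exactly $2$, leaving essentially no Fourier-theoretic slack---and presumably refining the methods developed by Erd\'elyi \cite{erdelyioriginal,erdelyi} and Sahasrabudhe \cite{sahasrabudhe} to handle the sign-indefinite coefficient setting that is inaccessible by their original arguments.
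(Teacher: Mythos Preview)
Your proposal is not a proof: the amplification step is stated as a plan, not executed, and you yourself identify the obstruction. Let me make that obstruction quantitative. With $R_N(z)=1+z+\cdots+z^{m-1}$ one has $R_N(1)=m$ but $Z(R_N)=m-1$ and the coefficients of $Q_N=P_NR_N$ lie in $\{-m,\dots,m\}$, so Theorem~\ref{erdetheo} gives at best
\[
Z(Q_N)\;\geqslant\;\frac{c}{1+\log m}\cdot\frac{\log\log\log(bm)}{\log\log\log\log(bm)}-1,
\]
which tends to $0$ as $m\to\infty$, whereas the trivial upper bound $Z(Q_N)\leqslant K+m-1$ grows. Cyclotomic amplifiers and short products behave the same way: any auxiliary factor large enough at $1$ to beat the triple-logarithm introduces either too many unimodular zeros or too large a coefficient range. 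The Parseval idea locates a root of unity $\omega$ with $|P_N(\omega)|\gg\sqrt{N}$, but Theorem~\ref{erdetheo} needs the value specifically at $\theta=0$, and rotating to move $\omega$ to $1$ destroys the integrality of the coefficients. In short, there is no evident way to push Theorem~\ref{erdetheo} through once $|P_N(1)|$ is bounded, and this is exactly the regime where the genuine difficulty lives (cf.\ the family~\eqref{fewrootspolys}).

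The paper does something quite different and does not attempt to amplify $|f(0)|$ at all. The missing idea in your outline is Theorem~\ref{boundedcomplexity}: if $\Re(Q_N(e^{i\theta}))$ has at most $d$ zeros, then the coefficient sequence of $Q_N$ decomposes into $O_d(1)$ blocks, each periodic with a fixed period $D=O_d(1)$. A compactness argument (Lemma~\ref{structurelem}) then pins down a single common block structure along a subsequence, after which one writes $Q_N$ as a short sum of geometric progressions. The contradiction comes not from Theorem~\ref{erdetheo} but from a direct local-oscillation argument: near a suitable $D$-th root of unity the function $\Re(Q_N(e^{i\theta}))$ is shown, via Lemma~\ref{lem2} and the Fej\'er--Riesz based Lemma~\ref{signchange}, to behave like a fixed almost-periodic function with infinitely many sign changes, forcing $\gg N$-size oscillations and hence arbitrarily many zeros. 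For even $N$ a simple Parseval comparison (Proposition~\ref{prop1}) verifies the hypotheses of Lemma~\ref{signchange}; for odd $N$ an additional degenerate case (all periodic blocks identical) requires the finer analysis of Proposition~\ref{prop2}. None of this structural machinery is present in your plan, and it is what actually replaces the inaccessible $|f(0)|$ input.
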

The analogous question in the setting of cosine polynomials appeared recently in Sahasrabudhe's paper \cite{sahasrabudhe}. We also answer this, showing that the number of zeros in $[0,2\pi]$ of a cosine polynomial $f(\theta)=\sum_{n=0}^N \varepsilon_n \cos n\theta$ with $\varepsilon_n\in\{-1,1\}$ tends to infinity with the degree $N$. Note again that Theorem \ref{erdetheo} does not apply in this setting since $|f(0)|$ can remain bounded. Rewriting such $f$ as a reciprocal polynomial in $z=e^{i\theta}$ shows that Theorem \ref{cosinetheorem} is essentially equivalent to the claim that $Z_{\mathcal{L}}(2N)\to \infty$ as $N\to \infty$.
\begin{theorem} 
\label{cosinetheorem}
    We have that $Z_{\{\pm1\}}(N)\to\infty$ as $N\to\infty$.
\end{theorem}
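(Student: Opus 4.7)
The plan is to deduce Theorem~\ref{cosinetheorem} from Theorem~\ref{maintheorem}, or more precisely from its method of proof, via the standard translation between cosine polynomials on $[0,2\pi]$ and reciprocal polynomials on the unit circle. Given $f(\theta) = \sum_{n=0}^N \varepsilon_n \cos n\theta$ with $\varepsilon_n \in \{\pm 1\}$, I set $z = e^{i\theta}$ and define
\[
P(z) \;:=\; 2z^N f(\theta) \;=\; 2\varepsilon_0 z^N + \sum_{n=1}^N \varepsilon_n\bigl(z^{N-n} + z^{N+n}\bigr).
\]
This is a polynomial of degree $2N$ satisfying $P(z) = z^{2N} P(1/z)$, and its unimodular zeros are in bijection with the zeros of $f$ in $[0,2\pi)$, so $Z(P) = Z(f)$. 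Every coefficient of $P$ lies in $\{\pm 1\}$ except the single middle coefficient $a_N = 2\varepsilon_0 \in \{\pm 2\}$; thus $P$ narrowly misses being a reciprocal Littlewood polynomial of degree $2N$.

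The strategy is then to argue that this minor perturbation of the middle coefficient does not affect the conclusion of Theorem~\ref{maintheorem}. One inspects the proof of Theorem~\ref{maintheorem} and verifies that each step is robust to replacing the middle $\pm 1$ by $\pm 2$. Since the Erd\'elyi--Sahasrabudhe techniques on which this paper is built already treat more general integer coefficients (as reflected in Theorem~\ref{erdetheo}), any dependence on $\|a\|_\infty$ worsens constants by at most a bounded factor, and the qualitative divergence $Z(P) \to \infty$ should persist. The author's own remark that Theorems~\ref{maintheorem} and~\ref{cosinetheorem} are ``essentially equivalent'' suggests that this extension is a routine adaptation rather than a separate undertaking.

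An alternative route I would explore, should the direct adaptation be awkward, is to decompose $P(z) = Q(z) + \varepsilon_0 z^N$ with $Q(z) = \varepsilon_0 z^N + \sum_{n=1}^N \varepsilon_n(z^{N-n}+z^{N+n}) \in \mathcal{L}_{2N}$, and try to compare the zero distributions of $P$ and $Q$. Theorem~\ref{maintheorem} applied to $Q$ produces many unimodular zeros of $Q$, but these only yield $|P(e^{i\theta})|=1$ at the relevant points, so transferring this to zeros of $P$ requires a quantitative argument (for instance, bounding derivatives or oscillation of $P/z^N$ on the unit circle and combining with the values forced by $Q$).

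The main anticipated obstacle is the line-by-line verification in either route: pinpointing each place in the proof of Theorem~\ref{maintheorem} where the value $\pm 1$ is genuinely used and checking that the argument tolerates one coefficient lying in $\{\pm 2\}$. Combinatorial counts enumerating sign patterns and any bounds involving $\|a\|_\infty$ would need minor adjustments. Provided the proof does not exploit an algebraic identity satisfied exclusively by $\pm 1$ sequences, the extension should go through without substantive change, yielding $Z(f) = Z(P) \to \infty$ as $N\to\infty$.
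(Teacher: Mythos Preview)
Your proposal is correct and matches the paper's own argument essentially verbatim: the paper converts $f$ to a reciprocal polynomial $P$ of even degree $2N$ via \eqref{Ptocosine}, observes that $P$ is Littlewood except for the single middle coefficient $2\varepsilon_0$, and notes that this $O(1)$ perturbation is absorbed by the $o(N)$ error term in the even-degree argument (Proposition~\ref{prop1} produces oscillations of size $cN$, against which a bounded additive shift is irrelevant). Your alternative route via $P=Q+\varepsilon_0 z^N$ is unnecessary and, as you suspected, would be harder to push through.
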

\textbf{Acknowledgements}
The author would like to thank Tam\'as Erd\'elyi and his supervisor Ben Green for their useful comments on earlier versions of the paper and for sharing helpful references. The author also gratefully acknowledges financial support from the EPSRC.
\section{Notation and prerequisites}
We will use the asymptotic notation $f=O(g)$ or $f\ll g$ if there is a constant $C$ such that $|f(n)|\leqslant C g(n)$ for all $n$, $f=\Theta(g)$ if there exist constants $C,C'$ such that $C'g(n)\leqslant f(n)\leqslant Cg(n)$ for all $n$, and we write $f=o(g)$ if $\frac{f(n)}{g(n)}\to 0$ as $n\to\infty$. Sometimes we emphasise the limit by writing $f=O_{n\to\infty}(g)$ or $f=o_{n\to\infty}(g)$. We also write $f=O_c(g)$ if the constant $C$ above is allowed to depend on $c$, and $f=o_c(g)$ if $\frac{f(n)}{g(n)}$ tends to $0$ as $n\to\infty$ for each fixed $c$ but at a rate that may depend on $c$. We say that $z\in\mathbf{C}$ is unimodular if $|z|=1$, and we write $\Re(z),\Im(z)$ for its real and imaginary parts. For a real number $t$, we use the notation $e(t)=e^{2\pi i t}$. We will make use of Parseval's theorem, which for the group $G=\mathbf{Z}/D\mathbf{Z}$ and functions $f,g:G\to \mathbf{C}$ states that
\begin{equation*}
    \sum_{m=0}^{D-1}\hat{f}(m)\overline{\hat{g}(m)}=D^{-1}\sum_{m=0}^{D-1}f(m)\overline{g(m)},
\end{equation*}
where $\hat{f}(m)= D^{-1}\sum_{j=0}^{D-1}f(j)e(-mj/D)$ denotes the Fourier transform of $f$. For the group $G=\mathbb{T}$ and a trigonometric polynomial $f(t)=\sum_{-N}^Nc_ne(nt)$, Parseval's theorem states that $\int_0^{1}|f(t)|^2\;dt=\sum_n|c_n|^2$.

\section{Set up and outline of the proof}
We prove Theorem \ref{maintheorem} by contradiction, assuming that there is a fixed integer $d$ and an infinite set $\mathcal{N}\subset\mathbf{N}$ such that for each $N\in\mathcal{N}$ we can find a reciprocal Littlewood polynomial $P_N(z)\in\mathcal{L}_N$ of degree $N$ that has at most $d$ unimodular zeros. Our starting point is a structural result on trigonometric polynomials with a bounded number of roots, appearing in the papers of Sahasrabudhe \cite{sahasrabudhe} and Erd\'elyi \cite{erdelyi}.
\begin{theorem}\cite[Theorem 4]{sahasrabudhe}. 
\label{boundedcomplexity} Let $R\subset \mathbf{Z}$ be finite and $d\in \mathbf{N}$. Then there is a constant $D$ depending only on $R,d$ such that the following holds. Let $$f(\theta)=\sum_{n=0}^NA_n\cos n\theta+\sum_{n=1}^NB_n\sin n\theta$$ be a trigonometric polynomial with coefficients $A_n,B_n\in R$ having at most $d$ roots in $[0,2\pi]$. Then there exists $L<D$ and disjoint intervals $I_j$ with $\cup_{j=1}^LI_j=[N]$ so that for each $j$, the sequences $(A_n)_{n\in I_j},(B_n)_{n\in I_j}$ are periodic with period $D$.
\end{theorem}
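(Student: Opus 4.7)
The plan is to convert the analytic hypothesis on the zero count of $f$ into a combinatorial statement about the coefficient sequences via Fourier decomposition modulo $D$. Throughout, I write $f$ in complex form as $f(\theta) = \sum_{n=-N}^{N} c_n e^{in\theta}$, where $c_0 = A_0$, $c_n = (A_n - iB_n)/2$ for $n > 0$, and $c_{-n} = \overline{c_n}$, so that each $c_n$ lies in a finite set $S = S(R) \subset \mathbf{Z}[i]$.

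The first step is a block decomposition along residues mod $D$. Setting
\[
h_j(\phi) = \sum_m c_{mD+j}\, e^{im\phi}, \quad j = 0, 1, \ldots, D-1,
\]
we have $f(\theta) = \sum_{j=0}^{D-1} e^{ij\theta}\, h_j(D\theta)$. The $D$ translates $f(\theta + 2\pi k/D)$ twist each block $h_j(D\theta)$ by the character $e^{2\pi i jk/D}$; since each translate has at most $d$ real zeros, taking linear combinations with the dual characters $e^{-2\pi i jk/D}$ isolates each block and, via a Rolle-type zero count for linear combinations of trigonometric polynomials, bounds the number of unimodular zeros of each $h_j$ by some $d' = d'(d, D)$.

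The second and most substantive step is to deduce, from the bound $d'$ on the unimodular zeros of $h_j$, that the coefficient subsequence $(c_{mD+j})_m$ is piecewise constant with few pieces. A natural approach is induction on the alphabet size $|S|$: the base case $|S| = 1$ is trivial, and for the inductive step, I would select a ``dominant'' value $v \in S$ and subtract off $v \cdot G(\phi)$, where $G$ is an auxiliary polynomial chosen so that the residual polynomial has coefficients in a strictly smaller alphabet while the unimodular zero count remains controlled (for instance via Rouch\'e's theorem applied in a neighborhood of the unit circle). The finiteness of $S$ and the uniform bound $d'$ together force the induction to close with constants depending only on $|R|$ and $d$.

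Gluing the pieces together, each $h_j$ being piecewise constant with $O(d')$ pieces translates back to $(c_n)_{n \equiv j \pmod{D}}$ being piecewise constant, whence $(c_n)$ is piecewise periodic of period $D$ on at most $L = O(D d')$ intervals; choosing $D$ large enough in terms of $|R|$ and $d$ delivers the required bound $L < D$. The main obstacle is the second step, specifically the alphabet reduction: subtracting off a dominant value does not preserve the ``bounded integer coefficients'' normalization in a way directly compatible with the zero-count hypothesis, and tracking how the unimodular zero count of $h_j$ responds to such manipulations --- uniformly in $N$ --- is where the heart of the argument must lie, and where the finiteness of $R$ has to be used decisively.
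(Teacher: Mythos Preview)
The paper does not prove this theorem at all: it is quoted verbatim as \cite[Theorem~4]{sahasrabudhe} and used as a black box throughout. So there is no ``paper's own proof'' to compare against; what you have written is an attempt to reprove an external result that the present paper takes for granted.

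As for your sketch itself, there are two genuine gaps, one of which you flag and one of which you do not. The unflagged gap is in your first step. You write the isolation of the block $h_j$ as a linear combination $D^{-1}\sum_k e^{-2\pi i jk/D} f(\theta + 2\pi k/D)$, and then assert that a ``Rolle-type zero count for linear combinations'' bounds the unimodular zeros of $h_j$ by some $d'(d,D)$. But the coefficients $e^{-2\pi i jk/D}$ are complex, so this linear combination is a complex-valued function of $\theta$, and Rolle's theorem does not apply. Knowing that each real translate $f(\theta+2\pi k/D)$ has at most $d$ zeros tells you essentially nothing about the zeros of a generic complex linear combination of them; there is no interlacing or sign-change argument available. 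This step needs a completely different idea.

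The second gap you identify yourself: deducing piecewise constancy of the coefficient sequence of $h_j$ from a bound on its unimodular zeros, uniformly in the degree. Your proposed induction on alphabet size via subtracting a dominant value and Rouch\'e is not a proof but a hope --- as you say, subtracting $v\cdot G(\phi)$ does not preserve the hypothesis in any controlled way, and you have given no mechanism by which the finiteness of $S$ forces this to work. This is exactly the heart of Sahasrabudhe's argument, and it is not short; it relies on a quantitative form of a theorem of Borwein--Erd\'elyi--K\'os together with an iterative structural argument. Your proposal correctly locates where the difficulty lives but does not resolve it.
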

The unimodular roots of a reciprocal polynomial $P(z)=\sum_{n=0}^{N}a_nz^n$ with real coefficients correspond to the roots of a cosine polynomial via the relations 
\begin{align}
P(e^{i\theta})e^{-i\theta  N/2}&=a_{N/2}+2\sum_{n=1}^{N/2}a_{N/2+n}\cos n\theta \; &&\text{for $N$ even,} \label{Ptocosine}\\
P(e^{2i\theta})e^{-iN\theta}&=2\sum_{n=1}^{(N+1)/2}a_{(N-1)/2+n}\cos((2n-1)\theta) &&\text{for $N$ odd}.
\nonumber
\end{align}
The polynomials $P_{N}$ with $N\in\mathcal{N}$ are reciprocal Littlewood polynomials by assumption so we can write 
\begin{equation}
    P_{N}(z)=\sum_{n=0}^{N}a^{(N)}_nz^n
\label{P_Ndefi}
\end{equation} with coefficients $a^{(N)}_n\in \{\pm1\}$ that satisfy $a^{(N)}_{n}=a^{(N)}_{N-n}$. Let us define for each $N\in\mathcal{N}$ a corresponding polynomial $Q_{M}$ of degree $M$ by
\begin{align}
Q_{N/2}(z)&\vcentcolon=\sum_{n=0}^{N/2}a^{(N)}_{N/2+n}z^n \; &&\text{if $N$ is even,}\label{Qdefi}\\
Q_{N}(z)&\vcentcolon=\sum_{n=1}^{(N+1)/2}a^{(N)}_{(N-1)/2+n}z^{2n-1} &&\text{if $N$ is odd.}\label{Qdefiodd}
\end{align}
Note that $Q_{N/2}$ is itself a Littlewood polynomial when $N$ is even. In this case, we will show that there exist large $N\in\mathcal{N}$ for which the function $\Re\left(Q_{N/2}(e^{i\theta})\right)$ oscillates sufficiently many times between $-cN$ and $cN$ on $[0,2\pi]$ for some constant $c>0$. This is enough to establish the conclusion of Theorem \ref{maintheorem} for even degrees $N$ as from \eqref{Ptocosine} we obtain the following relation between $P_N$ and $Q_{N/2}$ for even $N$:
\begin{align}
    P_{N}(e^{i\theta}) &= e^{i\theta N/2}\Re\left(2Q_{N/2}(e^{i\theta})-a_{N/2}\right).
\label{QtoPeven}
\end{align} The situation for 
odd $N$ is more delicate; in this case 
\begin{align}
    P_{N}(e^{2i\theta}) &= 2e^{i\theta N}\Re\left(Q_{N}(e^{i\theta})\right)
\label{QtoPodd}
\end{align}
but we cannot show that a function $\Re\left(Q_{N}(e^{i\theta})\right)$ of the form \eqref{Qdefiodd} has many roots by proving that it has large oscillations. The functions $h_m(\theta)=\Re\left(\sum_{n=0}^{2m}(-1)^{n}e^{i(2n+1)\theta}\right)$ which are similar to \eqref{fewrootspolys} are an example of this and they even have a bounded number of sign changes on $[0,2\pi]$ for all $m$. We will show that for odd $N$, either $\Re\left(Q_{N}(e^{i\theta})\right)$ has large oscillations so that we may conclude as before, or else we deduce a stronger periodicity condition for its coefficients than one gets from Theorem \ref{boundedcomplexity}. This additional structure will allow us to conclude by a different argument.

\section{A structural result}
As a first step, we prove the following lemma which shows that a sequence of polynomials with a bounded number of roots contains a subsequence of polynomials that all have the same structure.
\begin{lemma} \label{structurelem}
    Let $d\in\mathbf{N}$ and let $R\subset \mathbf{Z}$ be finite. Then there exists a constant $D$ depending only on $R,d$ such that the following holds. Let $\mathcal{N}\subset \mathbf{N}$ be an infinite set. Suppose that $\{Q_N(z):N\in\mathcal{N}\}$ is a collection of polynomials with coefficients in $R$, with $\deg Q_N=N$ and such that $\Re\left( Q_N(e^{i\theta})\right)$ has at most $d$ roots in $[0,2\pi]$ for each $N\in\mathcal{N}$. Then there exist an infinite subset $\mathcal{N}'\subset \mathcal{N}$, constants $l\leqslant D$ and $0=\rho_0<\rho_1<\dots<\rho_l=1$, and a sequence $(\varepsilon^{(j)}_m)_{m=1}^D\in R^D$ for each $j\in [l]$ so that for $N\in\mathcal{N}'$:
\begin{equation}
        Q_{N}(z) = \sum_{j=1}^{l}\left(\sum_{n\in \big(\rho_{j-1}N,\rho_jN\big)}\varepsilon^{(j)}_{n \,(\!\!\!\!\!\!\mod D)}z^n\right)+ E_N(z),
\label{lemmastructure}
    \end{equation}
where the $E_N(z)$ are polynomials with $o(N)$ terms and whose coefficients have size at most $2\max_{r\in R}|r|$.
\end{lemma}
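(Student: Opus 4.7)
The plan is to apply Theorem \ref{boundedcomplexity} to the real parts $\Re(Q_N(e^{i\theta}))$ and then pass to an infinite subsequence along which all the combinatorial data furnished by the theorem becomes uniform.

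Writing $Q_N(z) = \sum_{n=0}^N a_n^{(N)} z^n$, the function $\Re(Q_N(e^{i\theta})) = \sum_{n=0}^N a_n^{(N)} \cos n\theta$ is a cosine polynomial with coefficients in $R$ and at most $d$ roots in $[0, 2\pi]$. Theorem \ref{boundedcomplexity} therefore yields a constant $D = D(R, d)$ and, for each $N \in \mathcal{N}$, a partition of $\{0, 1, \dots, N\}$ into $L_N < D$ consecutive intervals $I_j^{(N)} = [s_j^{(N)}, s_{j+1}^{(N)} - 1]$ (with $s_1^{(N)} = 0$ and $s_{L_N+1}^{(N)} = N+1$) on each of which $(a_n^{(N)})$ is periodic with period $D$. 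Whenever $|I_j^{(N)}| \geq D$, this pattern is uniquely encoded by a function $\varphi_j^{(N)} \colon \mathbf{Z}/D\mathbf{Z} \to R$ with $a_n^{(N)} = \varphi_j^{(N)}(n \bmod D)$ for all $n \in I_j^{(N)}$.

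Next I would thin $\mathcal{N}$ in three stages. (i) Since $L_N < D$, pigeonhole gives an infinite $\mathcal{N}_1 \subset \mathcal{N}$ on which $L_N = L$ is constant. (ii) The vector $(s_j^{(N)}/N)_{j=1}^{L+1}$ lies in the compact set $[0,1]^{L+1}$, so Bolzano--Weierstrass yields an infinite $\mathcal{N}_2 \subset \mathcal{N}_1$ and limits $0 = \sigma_1 \leq \dots \leq \sigma_{L+1} = 1$ with $s_j^{(N)}/N \to \sigma_j$. Call $j$ \emph{thick} when $\sigma_{j+1} > \sigma_j$; for thick $j$, $|I_j^{(N)}| = \Theta(N)$, so $\varphi_j^{(N)}$ is well-defined for large $N$. (iii) There are at most $|R|^D$ functions $\mathbf{Z}/D\mathbf{Z} \to R$, so a final pigeonhole produces $\mathcal{N}' \subset \mathcal{N}_2$ on which, for every thick $j$, the pattern $\varphi_j^{(N)}$ equals some fixed $\varepsilon^{(j)}$ independent of $N$.

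Finally I would assemble the desired representation. Enumerate the thick indices as $j_1 < \dots < j_l$; one has $l \geq 1$ since the total length $N+1$ forces some $|I_j^{(N)}| \geq (N+1)/L$. Put $\rho_0 = 0$, $\rho_l = 1$, $\rho_k = \sigma_{j_k+1}$ for $1 \leq k < l$, and relabel $\varepsilon^{(j_k)}$ as $\varepsilon^{(k)}$; the strict inequality $\rho_{k-1} < \rho_k$ is automatic because the intermediate $\sigma$'s coincide on thin stretches. The interval $(\rho_{k-1} N, \rho_k N)$ differs from $I_{j_k}^{(N)}$ only by (a) the thin intervals located between consecutive thick ones, whose combined length is $o(N)$ since each thin $|I_j^{(N)}|/N \to 0$, and (b) endpoint corrections of size $|s_{j_k}^{(N)} - \rho_{k-1} N|$ and $|s_{j_k+1}^{(N)} - \rho_k N|$, each $o(N)$. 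Collecting the resulting discrepancies into $E_N$, each nonzero coefficient of $E_N$ is either some $a_n^{(N)} \in R$ or a difference $a_n^{(N)} - \varepsilon^{(k)}(n \bmod D)$ of two elements of $R$, hence of absolute value at most $2 \max_{r \in R} |r|$; and $E_N$ has $o(N)$ nonzero terms. The only technically delicate point is verifying that this discrepancy really is $o(N)$, which is immediate from the stabilization $s_j^{(N)}/N \to \sigma_j$ and the definition of thin intervals.
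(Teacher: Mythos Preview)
Your proof is correct and follows essentially the same approach as the paper: apply Theorem~\ref{boundedcomplexity}, pigeonhole to fix the number of intervals and the periodic patterns, and use compactness on the normalized endpoints to extract the limiting $\rho_k$. The only cosmetic differences are that the paper fixes the periodic patterns on \emph{all} intervals (not just thick ones) before passing to the compactness step, and it additionally arranges the interval startpoints to be multiples of $D$---a normalisation that is irrelevant for the lemma itself but is used in the subsequent Corollary~\ref{coroflem1}.
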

\begin{rmk*}
    We remind the reader that here, $o(N)$ denotes a function $f$ defined on $\mathcal{N}'$ such that $f(N)/N\to 0$ as $N\to\infty,N\in\mathcal{N}'$.
\end{rmk*}
\begin{proof}
Let us write $Q_{N}(z)=\sum_{n=0}^{N}q^{(N)}_nz^n$ for each $N\in\mathcal{N}$ so that $q^{(N)}_n\in R$ and $\deg Q_N=N$. Our aim is to prove \eqref{lemmastructure} for all $N$ in some infinite subset of $\mathcal{N}$, so we may assume throughout this proof that $N$ is sufficiently large. As we are assuming that the cosine polynomial $\Re(Q_{N})$ has at most $d$ roots, an application of Theorem \ref{boundedcomplexity} shows that for each $N\in\mathcal{N}$ the sequence of coefficients $(q^{(N)}_n)_{0\leqslant n\leqslant N}$ consists of a bounded number $L_N<D=O_{R,d}(1)$ of periodic sequences, each periodic with period $D$. Let us write $I^{(N)}_j$ for the intervals partitioning $[N]$ such that $(q^{(N)}_n)_{n\in I^{(N)}_j}$ is $D-$periodic for each $1\leqslant j\leqslant L_N$. By adding at most $D$ intervals, we may assume that the startpoints of the $I^{(N)}_j$ are multiples of $D$ and hence, so are their lengths. Since there are only finitely many values of $L_N$, we may restrict to an infinite subset $\mathcal{N}_1\subset \mathcal{N}$ so that $L_N=L$ is constant for each $N\in\mathcal{N}_1$. Note further that there are only finitely many ways to choose $L$ sequences in $R^D$. Hence, after further restricting to an appropriate subset $\mathcal{N}_2\subset\mathcal{N}_1$, we may also assume that the $D-$periodic pattern of the sequences $(q^{(N)}_n)_{n\in I^{(N)}_j}$ depends only on $j\in [L]$ but not on $N$ when $N\in\mathcal{N}_2$. Formally, this means precisely that we can find fixed sequences $(\epsilon^{(j)}_m)_{m=1}^{D}$ in $R^D$, one for each $j\in [L]$, so that for all $N\in\mathcal{N}_2$ we have 
\begin{equation}
q^{(N)}_n=\epsilon^{(j)}_{n (\!\!\!\!\!\!\mod D)}
\label{epsilondefilemma}
\end{equation} when $n\in I^{(N)}_j$. Therefore, when $N\in\mathcal{N}_2$, we can write
\begin{equation}
    Q_N(z) = \sum_{j=1}^L\left(\sum_{n\in I^{(N)}_j}\epsilon^{(j)}_{n \,(\!\!\!\!\!\!\mod D)}z^n\right).
\label{Q_Nlemmafull}
\end{equation}
Let us write the intervals as $I^{(N)}_j=\left[s^{(N)}_{j-1},s^{(N)}_j\right)$ so $0=s^{(N)}_0<\dots <s^{(N)}_L=N+1$. By a compactness argument and by again restricting to an infinite subset $\mathcal{N}_3\subset \mathcal{N}_2$, we may assume that the sequence of vectors $\sigma_N=\left(\frac{s^{(N)}_0}{N},\frac{s^{(N)}_1}{N},\dots,\frac{s^{(N)}_L}{N}\right)$ converges to $\sigma=(\sigma_0,\sigma_1,\dots,\sigma_L)\in[0,1]^{L+1}$ as $N\to \infty,N\in\mathcal{N}_3$. It is clear that $0=\sigma_0\leqslant \sigma_1\leqslant \dots\leqslant \sigma_L=1$. Let us now define $\rho=(\rho_0,\rho_1,\dots,\rho_l)\in[0,1]^{l+1}$ to be the vector obtained from $\sigma$ by taking $\rho_0=\sigma_0=0$ and for $i\geqslant 1$ setting $\rho_i = \min\{\sigma_j: \sigma_j>\rho_{i-1}\}$, so one gets $\rho$ by deleting repeated entries in the vector $\sigma$. Let us also define for each $j\in[l]$ the index $k_j$ to be the least integer $k_j\in[L]$ for which $\sigma_{k_j}=\rho_j$. Hence, if $N\in\mathcal{N}_3$ it follows that for each $j\in[l]$:
\begin{equation}
    I^{(N)}_{k_j}=\left(\rho_{j-1}N+o(N),\rho_jN+o(N)\right)
\label{denseintervals}
\end{equation}
and that $\left|I^{(N)}_i\right|=o(N)$ if $i\in[L]\setminus\{k_1,\dots,k_l\}$.
When $N\in\mathcal{N}_3$, we may write
\begin{equation}
    Q_{N}(z) = \sum_{j=1}^{l}\left(\sum_{n\in \big(\rho_{j-1}N,\rho_jN\big)}\epsilon^{(k_j)}_{n \,(\!\!\!\!\!\!\mod D)}z^n\right)+ E_N(z),
\end{equation}
where we observe from \eqref{Q_Nlemmafull} that if
\begin{align*}
E_N(z)&= Q_N(z)-\sum_{j=1}^{l}\left(\sum_{n\in \big(\rho_{j-1}N,\rho_jN\big)}\epsilon^{(k_j)}_{n \,(\!\!\!\!\!\!\mod D)}z^n\right)
\label{E_Nform}
\end{align*} has a term $z^n$ with a non-zero coefficient, then $n\notin \cup_{j=1}^l\left(I^{(N)}_{k_j}\cap (\rho_{j-1}N,\rho_j N)\right)$. By \eqref{denseintervals} there are at most $o(N)$ such integers $n\in[N]$ so it follows that the $E_N(z)$ are polynomials with $o(N)$ terms and coefficients of size at most $2\max_{r\in R} |r|$. Hence, if we define the sequences $(\varepsilon^{(j)}_m)=(\epsilon^{(k_j)}_m)$ for $j$ in $[l]$, then we obtain \eqref{lemmastructure}.
\end{proof}
For the final section of this paper, it is convenient to have following result which describes the structure of the polynomials $Q_N$ in a more detailed manner than Lemma \ref{structurelem} does, under the assumption that all the sequences $(\varepsilon^{(j)}_m)_{m=1}^D$ in \eqref{lemmastructure} are identical.
\begin{cor} \label{coroflem1}
Let $D=D(R,d)$ be the constant from Lemma \ref{structurelem}, let $\mathcal{N}'\subset \mathbf{N}$ be an infinite set and let $\{Q_N:N\in\mathcal{N}'\}$ be a collection of polynomials satisfying the conclusion of Lemma \ref{structurelem}. Suppose that the sequences $(\varepsilon^{(j)}_m)_{m=1}^D\in R^D$ in \eqref{lemmastructure} are identical for $j\in [l]$ and let $(\varepsilon_m)_{m=1}^{D}\in R^D$ be this common sequence. Then there exist constants $L'\leqslant D$ and $0\leqslant \tau_0\leqslant\dots\leqslant\tau_{L'}\leqslant 1$, sequences $(\delta^{(j)}_m)_{m=1}^D$ for $j\in[L']$, and for each $N\in\mathcal{N}'$ a collection of $L'$ disjoint intervals $J^{(N)}_j\subset [N]$, $j\in [L']$, such that the following holds. 
\begin{enumerate}
    \item[(i)] The starting point and the length of each of the intervals $J^{(N)}_j$ is a multiple of $D$, and $\min J^{(N)}_j,\max J^{(N)}_j=\tau_j N+o(N)$ for each $j\in[L']$. In particular, the intervals have length $\left|J^{(N)}_j\right|=o_{N\to\infty,N\in\mathcal{N}'}(N)$.
    \item[(ii)] For each $N\in\mathcal{N}'$, we can write
\begin{equation*}
    Q_{N}(z)= \left(\sum_{m=0}^{D-1} \varepsilon_mz^m\right)\frac{z^{N+1}-1}{z^D-1}+\sum_{j=1}^{L'}\left(\sum_{m\in J^{(N)}_j}\delta^{(j)}_{m\,(\!\!\!\!\!\!\mod D)}z^m\right).
\end{equation*}
\end{enumerate}
\end{cor}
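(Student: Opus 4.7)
The plan is to recombine the main periodic pieces of the decomposition provided by Lemma \ref{structurelem} into the single geometric-series product appearing in (ii). By pigeonhole on residues modulo $D$, I would first pass to a further infinite subset of $\mathcal{N}'$ on which $N\equiv -1\pmod{D}$, so that
\begin{equation*}
M_N(z):=\sum_{n=0}^{N}\varepsilon_{n\bmod D}\,z^n=\left(\sum_{m=0}^{D-1}\varepsilon_mz^m\right)\frac{z^{N+1}-1}{z^D-1}
\end{equation*}
is a genuine polynomial of degree $N$. The task then reduces to showing that $Q_N(z)-M_N(z)$ has the form claimed in (ii).

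To this end I would unpack the construction inside the proof of Lemma \ref{structurelem}: for $N\in\mathcal{N}'$ the coefficients of $Q_N$ are specified by a partition $[0,N]=\bigcup_{j=1}^{L}I^{(N)}_j$ with $L\leqslant D$ fixed, each $I^{(N)}_j$ starting and ending at a multiple of $D$, and with the coefficient of $z^n$ in $Q_N$ equal to $\epsilon^{(j)}_{n\bmod D}$ for a fixed $R$-valued sequence $\epsilon^{(j)}$ independent of $N$. The $l$ indices $k_1,\ldots,k_l$ correspond to the ``long'' intervals of length $\Theta(N)$ appearing in \eqref{lemmastructure}, and by the hypothesis of the corollary their patterns $\epsilon^{(k_j)}$ all coincide with $\varepsilon$; the remaining $L-l$ ``short'' intervals have length $o(N)$ but may carry different patterns $\epsilon^{(i)}$.

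Since $Q_N$ and $M_N$ agree coefficient-wise on every long interval, the difference $Q_N-M_N$ is supported entirely on the union of the short intervals. Relabelling these as $J^{(N)}_1,\ldots,J^{(N)}_{L'}$ with $L'=L-l\leqslant D$, and defining $\delta^{(j)}_m:=\epsilon^{(i_j)}_m-\varepsilon_m$ where $i_j\in[L]$ is the $N$-independent index of the $j$-th short interval, one immediately reads off the identity in (ii). The properties in (i) are then bookkeeping: starts and lengths of $J^{(N)}_j$ are multiples of $D$ by construction, $|J^{(N)}_j|=o(N)$ because short intervals have sublinear length, and the condition $\min J^{(N)}_j,\max J^{(N)}_j=\tau_j N+o(N)$ follows from the compactness step already carried out in the proof of Lemma \ref{structurelem}, which ensured convergence of each ratio $s^{(N)}_i/N$ along a subsequence. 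I do not anticipate a genuine obstacle: the only subtle points are the divisibility condition $D\mid N+1$ and the consistent labelling of short intervals across $N\in\mathcal{N}'$, both of which are handled by the same pigeonhole arguments on finite data that appeared in Lemma \ref{structurelem}. The real content is the observation that once the dominant patterns all coincide with a single $\varepsilon$, the bulk of $Q_N$ equals $M_N$ exactly, and what remains is a sublinear set of positions organised into $O(1)$ localised intervals.
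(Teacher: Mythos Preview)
Your approach is essentially the same as the paper's: both take the $J^{(N)}_j$ to be the ``short'' intervals $I^{(N)}_i$ with $i\in[L]\setminus\{k_1,\dots,k_l\}$ from the proof of Lemma~\ref{structurelem}, set $\delta^{(j)}_m=\epsilon^{(i_j)}_m-\varepsilon_m$, and read off (i) and (ii) directly from the bookkeeping already carried out there (convergence of the $s^{(N)}_i/N$, divisibility of startpoints and lengths by $D$). The one difference is your preliminary pigeonhole step restricting to $N\equiv -1\pmod{D}$: the paper does not do this and simply writes the main term as the rational expression $\bigl(\sum_{m}\varepsilon_mz^m\bigr)(z^{N+1}-1)/(z^D-1)$ without comment. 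Your precaution makes the geometric-series identity literally a polynomial identity, but note that it technically yields the conclusion only on a further infinite subset of $\mathcal{N}'$ rather than on all of $\mathcal{N}'$ as the corollary is stated; this is harmless for the downstream application.
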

\begin{proof}
This is essentially immediate from the proof of Lemma \ref{structurelem} by taking for each $N\in\mathcal{N}'$ the intervals $J^{(N)}_j$ to be all of the intervals $I^{(N)}_i$ for which $i\in[L]\setminus\{k_1,\dots,k_l\}$, using the notation from the proof of Lemma \ref{structurelem}. Indeed, to see that item (i) holds, note that we chose the intervals $I^{(N)}_i$ in Lemma \ref{structurelem} to have starting points that are multiples of $D$ so that their lengths are multiples of $D$ and hence the same is true for the intervals $J^{(N)}_j$. Next, recall that $I^{(N)}_{k_j}=\left(\rho_{j-1}N+o(N),\rho_j N+o(N)\right)$ for $j\in [l]$ by \eqref{denseintervals}. Hence, each of the intervals $J^{(N)}_j$, which is of the form $I^{(N)}_i$ with $i\in[L]\setminus\{k_1,\dots,k_l\}$, is contained in an interval of the form $\left(\rho_{j}N+o(N),\rho_jN+o(N)\right)$ so that $\min J^{(N)}_j,\max J^{(N)}_j=\tau_jN+o(N)$ for some $\tau_j\in\{\rho_0,\dots,\rho_l\}$. Finally, note that item (ii) follows directly from \eqref{Q_Nlemmafull} and the assumption of Corollary \ref{coroflem1} that all the sequences $(\epsilon^{(k_j)}_m)$ are equal to $(\varepsilon_m)$ for $j\in[l]$, if we define, corresponding to each interval $J^{(N)}_j=I^{(N)}_i$ where $i\in[L]\setminus\{k_1,\dots,k_l\}$, the sequence $(\delta^{(j)}_m)_{m=1}^D$ by 
\begin{equation}
    \delta^{(j)}_m= \epsilon^{(i)}_m-\varepsilon_m.
\label{deltafirstdefi}
\end{equation}
\end{proof}
The following lemma will be useful at several stages in our argument. It allows us to show that a sequence of trigonometric functions with a certain common structure contains a subsequence of functions that all behave, on an appropriate scale, like the same fixed function.
\begin{lemma} \label{lem2}
    Let $\mathcal{N}\subset \mathbf{N}$ be an infinite set, let $R\subset\mathbf{C}$ be finite, let $0\leqslant \rho_0<\dots<\rho_l\leqslant 1$ and let $l,D\in\mathbf{N}$. Let $\{H_N(\theta):[0,1]\to\mathbf{C}\}_{N\in\mathcal{N}}$ be a set of functions. Suppose that for each $N\in\mathcal{N}$ there are $l+1$ trigonometric polynomials $B^{(N)}_j(\theta)$ with coefficients in $R$, with at most $D$ non-zero coefficients, and of degree $\deg B^{(N)}_j = o(N)$ so that for each $N\in\mathcal{N}$ we can write
    \begin{equation}
        H_N(\theta) = \sum_{j=0}^lB^{(N)}_j(\theta)e(r^{(N)}_j\theta),
    \end{equation}
    where the $r^{(N)}_j$ are real numbers satisfying $r^{(N)}_j=\rho_jN+o(N)$. If $(\gamma_N)_{N\in\mathcal{N}}$ is a sequence of points $\gamma_N\in[0,1]$, then there exists an infinite subset $\mathcal{N}'\subset\mathcal{N}$ and constants $b_j\in\mathbf{C}$ such that the following holds. Let $C>0$, then for all $N\in\mathcal{N}'$ and $u\in(-C,C)$ we have
    \begin{equation}
    H_N\left(\gamma_N+\frac{u}{N}\right)=\sum_{j=0}^lb_je(\rho_j u) +o_{C;N\to\infty}(1).
\label{lemma2Q}
    \end{equation}
    Moreover, for each $j$ we have
    \begin{equation}
        \lim_{N\to\infty,N\in\mathcal{N}'} B^{(N)}_j(\gamma_N)e(r^{(N)}_j\gamma_N)=b_j.
    \label{b_jlem2size}
    \end{equation}
\end{lemma}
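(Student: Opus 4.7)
The plan is a straightforward compactness plus diagonal extraction argument, exploiting that each $B_j^{(N)}$ has at most $D$ nonzero coefficients drawn from the finite set $R$, and that the frequencies appearing in $B_j^{(N)}$ are $o(N)$. First I would write each polynomial explicitly as
\begin{equation*}
B_j^{(N)}(\theta) = \sum_{k=1}^{K_j^{(N)}} c_{j,k}^{(N)}\, e(n_{j,k}^{(N)}\theta),
\end{equation*}
where $K_j^{(N)}\leqslant D$, $c_{j,k}^{(N)}\in R$, and $|n_{j,k}^{(N)}|\leqslant \deg B_j^{(N)} = o(N)$. Since $K_j^{(N)}$ takes only finitely many values and $R$ is finite, pigeonhole allows me to pass to an infinite subset of $\mathcal{N}$ on which $K_j^{(N)}=K_j$ and $c_{j,k}^{(N)}=c_{j,k}$ are independent of $N$ for every $j,k$.

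Next, since the quantities $e\bigl((n_{j,k}^{(N)}+r_j^{(N)})\gamma_N\bigr)$ (one for each of the boundedly many pairs $(j,k)$) lie on the compact unit circle, a diagonal extraction yields a further infinite subset $\mathcal{N}'\subset\mathcal{N}$ on which
\begin{equation*}
e\bigl((n_{j,k}^{(N)}+r_j^{(N)})\gamma_N\bigr) \longrightarrow \alpha_{j,k}
\end{equation*}
as $N\to\infty$ through $\mathcal{N}'$, for certain unit complex numbers $\alpha_{j,k}$. I then define $b_j\vcentcolon= \sum_{k=1}^{K_j} c_{j,k}\,\alpha_{j,k}$. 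To verify the two conclusions I would expand
\begin{equation*}
B_j^{(N)}\bigl(\gamma_N+\tfrac{u}{N}\bigr)\, e\bigl(r_j^{(N)}(\gamma_N+\tfrac{u}{N})\bigr)
=\sum_{k=1}^{K_j} c_{j,k}\, e\bigl((n_{j,k}^{(N)}+r_j^{(N)})\gamma_N\bigr)\, e\bigl(n_{j,k}^{(N)}u/N\bigr)\, e\bigl(r_j^{(N)}u/N\bigr),
\end{equation*}
and observe, for $u\in(-C,C)$ and $N\in\mathcal{N}'$, that the first factor tends to $\alpha_{j,k}$ by construction; the second factor tends to $1$ uniformly in $u$ because $n_{j,k}^{(N)}/N\to 0$; and the third factor tends to $e(\rho_j u)$ uniformly in $u$ because $r_j^{(N)}/N\to \rho_j$. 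Summing over the bounded number of terms and then over $j$ yields \eqref{lemma2Q} with an error $o_{C;N\to\infty}(1)$, and the specialisation $u=0$ gives \eqref{b_jlem2size}.

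The only genuinely delicate point is book-keeping: one must arrange simultaneous convergence of the $O(lD)$ unit-modulus quantities $e\bigl((n_{j,k}^{(N)}+r_j^{(N)})\gamma_N\bigr)$, which is why a diagonal extraction is invoked rather than a naive sequential one, and one must check that the errors in each of the three factors above are uniform for $u$ in the bounded window $(-C,C)$. Both of these are routine consequences of the hypotheses that $R$ is finite, $K_j\leqslant D$, and $\deg B_j^{(N)} = o(N)$, so I would expect no substantive obstacle; the lemma is essentially an exercise in compactness packaged to feed into the local analysis of $H_N$ near $\gamma_N$ on the scale $1/N$.
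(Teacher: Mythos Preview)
Your proof is correct and follows essentially the same compactness argument as the paper. The only difference is cosmetic: the paper applies compactness directly to the bounded vector $\bigl(B^{(N)}_j(\gamma_N)e(r^{(N)}_j\gamma_N)\bigr)_{j=0}^l\in\mathbf{C}^{l+1}$ rather than first pigeonholing to fix the coefficients $c_{j,k}$ and then extracting limits of the individual phases $e\bigl((n_{j,k}^{(N)}+r_j^{(N)})\gamma_N\bigr)$, which makes the bookkeeping a touch shorter but is otherwise the same idea.
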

\begin{rmk*}
    Here, we use the notation $o_{C;N\to\infty}(1)$ for a function $f(N,u)$ that satisfies $\sup_{u\in(-C,C)}|f(N,u)|\to 0$ as $N\to\infty,N\in \mathcal{N}'$.
\end{rmk*}
\begin{proof}
    This follows from the following elementary compactness argument. Note that each trigonometric polynomial $B^{(N)}_j$ has at most $D$ non-zero terms and coefficients in a finite set $R$ so that the vectors $(B^{(N)}_j(\gamma_N)e(r^{(N)}_j\gamma_N))_{j=0}^l$ with $N\in\mathcal{N}$ form a bounded sequence in $\mathbf{C}^{l+1}$. By compactness, there exists an infinite set $\mathcal{N}'\subset\mathcal{N}$ and constants $b_j\in\mathbf{C}$ such that $B^{(N)}_j(\gamma_N)e(r^{(N)}_j\gamma_N)\to b_j$ as $N\to\infty,N\in\mathcal{N}'$. Let $C>0$ be a fixed constant. Since each $B^{(N)}_j$ has degree $o(N)$ and at most $D$ terms with coefficients in $R$, we can use the basic bound $|e(\alpha)-e(\beta)|\ll |\alpha-\beta|$ for $\alpha,\beta\in\mathbf{R}$ to deduce that for all $u\in(-C,C)$:\begin{align*}
        &\left|B^{(N)}_j(\gamma_N+u/N)e(r^{(N)}_j\gamma_N) - B^{(N)}_j(\gamma_N)e(r^{(N)}_j\gamma_N)\right|\\
        &= \left|B^{(N)}_j(\gamma_N+u/N)- B^{(N)}_j(\gamma_N)\right|\ll_{R,D,C}\deg B^{(N)}_j/N=o_{C;N\to\infty}(1)
    \end{align*} and hence that $H_N(\gamma_N+u/N)=\sum_jb_je(u r^{(N)}_j/N)+o_C(1)$ uniformly for all $u\in(-C,C)$. Using now that $|e(ur^{(N)}_j/N)-e(u\rho_j)|\ll_C o(1)$ when $|u|<C$ by the assumption that $r^{(N)}_j=\rho_jN+o(N)$, it follows immediately that $H_N(\gamma_N+u/N)=\sum_{j=0}^lb_je(\rho_j u) +o_{C;N\to\infty}(1)$.
\end{proof}
We state one more lemma which provides us with a useful criterion for when a trigonometric function with real frequencies, like those appearing in the right hand side of \eqref{lemma2Q}, has many zeros.
\begin{lemma} \label{signchange}
    Let $H$ be a function of the form \begin{equation}
        H(u) = a_0+\sum_{j=1}^{l}\left(a_je(\rho_ju)+\overline{a_j}e(-\rho_ju)\right)
    \label{Hinlemma}
    \end{equation} with coefficients $a_0\in\mathbf{R}$, $a_j\in\mathbf{C}$ and $0<\rho_1<\dots<\rho_{l-1}<\rho_l=1$. Suppose that either $2|a_l|>|a_0|$, or else that $2|a_l|\geqslant|a_0|$ and $a_j\neq 0$ for some $j$, $1\leqslant j\leqslant l-1$. Then $H$ has infinitely many sign changes on $\mathbf{R}$.
\end{lemma}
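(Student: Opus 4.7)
The plan is to argue by contradiction: if $H$ has only finitely many sign changes, then it is eventually of constant sign on some half-line $[A,\infty)$, and since $H$ is a finite sum of exponentials with real frequencies it is uniformly almost periodic on $\mathbf{R}$. Translating by sufficiently large $\varepsilon$-almost periods then propagates the sign to all of $\mathbf{R}$; replacing $H$ by $-H$ if necessary (which preserves the hypotheses), we may assume $H\geqslant 0$ on $\mathbf{R}$. In particular, taking the mean gives $a_0\geqslant 0$. Writing $a_l=|a_l|e^{i\phi_l}$ and
\[
G(u)=\sum_{j=1}^{l-1}\left(a_je(\rho_ju)+\overline{a_j}e(-\rho_ju)\right),\qquad H(u)=a_0+2|a_l|\cos(2\pi u+\phi_l)+G(u),
\]
the key step is to evaluate $H$ on the arithmetic progression $u_1+m$, $m\in\mathbf{Z}$, where $u_1$ is chosen so that $\cos(2\pi u_1+\phi_l)=-1$. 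Integer shifts preserve this value, so $G(u_1+m)\geqslant 2|a_l|-a_0$ for every $m\in\mathbf{Z}$. On the other hand, the Cesaro mean $M^{-1}\sum_{m=1}^MG(u_1+m)$ tends to $0$, because for each $j\in[1,l-1]$ the frequency $\rho_j\in(0,1)$ is not an integer and so the geometric sums $\sum_{m=1}^Me^{\pm 2\pi i\rho_jm}$ are uniformly bounded in $M$.

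In the strict case $2|a_l|>|a_0|$, the bound $a_0\geqslant 0$ gives $2|a_l|-a_0>0$, so $G(u_1+m)$ is bounded below by a strictly positive constant, directly contradicting the vanishing Cesaro mean. This immediately handles this case.

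The boundary case $2|a_l|=|a_0|$ with some $a_j\neq 0$ for $1\leqslant j\leqslant l-1$ is where the main difficulty lies. The inequality now only gives $G(u_1+m)\geqslant 0$, which together with the zero Cesaro mean and the almost-periodicity of $m\mapsto G(u_1+m)$ on $\mathbf{Z}$ forces $G(u_1+m)=0$ for every $m\in\mathbf{Z}$. Since the points $u_1+m$ are then local minima of the smooth non-negative function $H$, one additionally obtains $G'(u_1+m)=0$ for every $m$. I would then expand the two identities $G(u_1+m)=G'(u_1+m)=0$ as linear combinations of the characters $m\mapsto e^{\pm 2\pi i\rho_jm}$ of $\mathbf{Z}$, and collect terms according to which characters coincide. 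Such coincidences happen precisely when $\rho_j+\rho_k=1$ with $j\neq k$, or when $\rho_j=\tfrac12$. In each of the three situations---a singleton $\rho_j$, a pair with $\rho_j+\rho_k=1$, or the self-paired class $\rho_j=\tfrac12$---the two identities combine, with the weights $\rho_j,\rho_k$ from the $G'$ equation summing exactly to $1$, to force $a_j=0$, contradicting the hypothesis that some middle coefficient is nonzero. The degenerate sub-case $a_0=a_l=0$ is handled separately but is immediate: then $H=G$ has mean zero and is non-trivial, hence cannot be non-negative on $\mathbf{R}$. I expect this character-coincidence analysis to be the main technical obstacle, since the naive linear independence of the characters of $\mathbf{Z}$ fails exactly when $\rho_j+\rho_k=1$, and it is precisely the auxiliary identity $G'=0$ (supplied by the local-minimum condition) that provides the second equation needed to resolve these coincidences.
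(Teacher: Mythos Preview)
Your argument is correct and genuinely different from the paper's.

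The paper's proof decomposes $H=w_1+w_2$ according to whether the frequencies $\rho_j$ are rational or irrational, and then invokes the Fej\'er--Riesz theorem (via its Corollary~\ref{cor1}) to force a sign change in the rational part $w_1$. When Fej\'er--Riesz is not decisive---namely when $2|a_l|=|a_0|$ and all remaining non-zero middle coefficients have irrational $\rho_j$---the paper locates the double root $\theta$ of $w_1$, and runs first- and second-moment estimates on $H(\theta+m)$ and $H'(\theta+m)$ over $m\in\mathbf{Z}$, finishing in the degenerate sub-case with a Taylor expansion using the boundedness of $H''$.

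Your route bypasses Fej\'er--Riesz entirely. The reduction via uniform almost periodicity to $H\geqslant 0$ on all of $\mathbf{R}$ is clean, and evaluating on the integer progression $u_1+m$ through the minimiser of the leading cosine immediately gives the strict case. In the boundary case your argument is sharper than the paper's moment estimates: from $H\geqslant 0$ with $H(u_1+m)=0$ you extract the \emph{exact} identities $G(u_1+m)=G'(u_1+m)=0$ for every $m\in\mathbf{Z}$, and then your character-coincidence analysis on $\widehat{\mathbf{Z}}$---handling singletons, pairs $\rho_j+\rho_k=1$, and the self-paired $\rho_j=\tfrac12$ uniformly---kills each $a_j$ by a short linear-algebra computation (the key being that the weights in the $G'$-equation sum to $\rho_j+\rho_k=1\neq 0$). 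You also do not need to separate rational from irrational frequencies at any point.

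What each approach buys: yours is more elementary and self-contained, needing only basic almost periodicity and orthogonality of characters of $\mathbf{Z}$; the paper's second-moment machinery is heavier here but is a technique that recurs later in the paper (e.g.\ in the proof of Lemma~\ref{concludelemma}), so it is natural for the author to have reached for it.
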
  To prove this lemma, we use the following classical theorem of Fej\'er \cite{fejer} and Riesz \cite{riesz}, see also \cite[page 26]{BarryforFEJERRIESZ} for a more modern discussion.
\begin{theorem}[Fej\'er-Riesz Theorem]
Let $w(t)=\sum_{n=-N}^Nc_ne^{itn}$ be a real-valued trigonometric polynomial that is non-negative for all real $t$. Then $w$ is expressible in the form
$w(t)=|p(e^{it})|^2$ for some polynomial $p(z)=\sum_{n=0}^Nd_nz^n$ with $d_n\in\mathbf{C}$.   
\end{theorem}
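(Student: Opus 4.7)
The plan is to argue by contradiction. Suppose $H$ has only finitely many sign changes; after possibly replacing $H$ by $-H$, we may assume $H(u)\geq 0$ for all sufficiently large $u$. The first step is to upgrade this to nonnegativity on all of $\mathbf{R}$. Since $H$ is a finite sum of exponentials with real frequencies, it is Bohr almost-periodic, so for every $\epsilon>0$ the set of $\tau$ with $\sup_v|H(v+\tau)-H(v)|<\epsilon$ is relatively dense. If $H(v_0)<-\delta$ at some point $v_0$, then taking $\epsilon=\delta/2$ and translating by an arbitrarily large almost-period produces arbitrarily large arguments at which $H<-\delta/2$, contradicting the assumption. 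Hence $H\geq 0$ on all of $\mathbf{R}$, and in particular $a_0\geq 0$ (as $a_0$ is the Bohr mean of $H$).

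The core of the proof is to reduce to the periodic setting so that the Fej\'er--Riesz theorem applies. By Dirichlet's simultaneous approximation theorem, for every $\eta>0$ one can find $q\in\mathbf{N}$ and $p_1,\dots,p_{l-1}\in\mathbf{Z}$ with $|\rho_j-p_j/q|\leq\eta/q$; setting $p_l=q$ gives $\rho_l=p_l/q$ exactly, and for $\eta$ sufficiently small the $p_j$ are distinct integers with $0<p_j<q$. I would then introduce the $q$-periodic trigonometric polynomial
\[
\tilde H(u)=a_0+\sum_{j=1}^{l-1}\left(a_je(p_ju/q)+\overline{a_j}e(-p_ju/q)\right)+a_le(u)+\overline{a_l}e(-u),
\]
for which the estimate $|e(\alpha)-e(\beta)|\leq 2\pi|\alpha-\beta|$ gives $|H-\tilde H|=O(\eta)$ on $|u|\leq q$, and hence $\tilde H\geq -C\eta$ globally by periodicity. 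After the substitution $\theta=2\pi u/q$, the function $\tilde H+C\eta$ becomes a nonnegative real trigonometric polynomial in $\theta$ with integer frequencies in $[-q,q]$, and the Fej\'er--Riesz theorem supplies a polynomial $P(z)=\sum_{n=0}^q d_nz^n$ such that $\tilde H(u)+C\eta=|P(e(u/q))|^2$.

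Comparing Fourier coefficients then gives $a_0+C\eta=\sum_n|d_n|^2$, $a_l=d_q\overline{d_0}$, and $a_j=\sum_{n-m=p_j}d_n\overline{d_m}$ for $1\leq j\leq l-1$. The AM--GM inequality yields $2|a_l|=2|d_0||d_q|\leq|d_0|^2+|d_q|^2\leq a_0+C\eta$, and letting $\eta\to 0$ forces $2|a_l|\leq a_0=|a_0|$, immediately contradicting the strict hypothesis $2|a_l|>|a_0|$. In the borderline case $2|a_l|=|a_0|=a_0$, the same chain of inequalities forces $\sum_{0<n<q}|d_n|^2=O(\eta)$; since $0<p_j<q$, every pair $(n,m)$ contributing to $a_j$ has at least one of $n,m$ in $(0,q)$, so Cauchy--Schwarz gives $|a_j|=O(\sqrt\eta)$, and sending $\eta\to 0$ forces $a_j=0$ for all $1\leq j\leq l-1$, again a contradiction. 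I expect the trickiest step to be the almost-periodic extension of nonnegativity from a half-line to all of $\mathbf{R}$; the remainder is a routine rational-approximation-plus-Fej\'er--Riesz computation.
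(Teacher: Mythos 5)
Your proposal does not prove the statement at hand. The statement is the Fej\'er--Riesz factorization theorem itself: every real-valued trigonometric polynomial $w(t)=\sum_{n=-N}^N c_ne^{int}$ that is non-negative on $\mathbf{R}$ can be written as $w(t)=|p(e^{it})|^2$ with $p(z)=\sum_{n=0}^N d_nz^n$. The paper offers no proof of this; it quotes it as a classical result of Fej\'er and Riesz and only uses it (through Corollary \ref{cor1}) in the proof of Lemma \ref{signchange}. What you have written is instead an argument for (a version of) Lemma \ref{signchange}: you assume the almost-periodic sum $H(u)=a_0+\sum_j\left(a_je(\rho_ju)+\overline{a_j}e(-\rho_ju)\right)$ has finitely many sign changes, upgrade eventual non-negativity to global non-negativity via Bohr almost-periodicity, rationalize the frequencies with a common denominator $q$ by Dirichlet's theorem, and then --- crucially --- \emph{invoke} the Fej\'er--Riesz theorem for the approximating $q$-periodic polynomial. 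As a proof of the Fej\'er--Riesz theorem this is circular, and at no point does your argument produce the asserted factorization $w=|p|^2$; it ends with a sign-change contradiction about $H$, which is a different conclusion about a different object.

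A proof of the actual statement would run along the classical lines: consider $W(z)=z^N\sum_{n=-N}^N c_nz^n$, use realness of $w$ (so $c_{-n}=\overline{c_n}$) to see that the zero set of $W$ is invariant under $z\mapsto 1/\overline{z}$ with matching multiplicities, use non-negativity on $|z|=1$ to force even multiplicity of the unimodular zeros, and then group the zeros into a polynomial $p$ with $w(t)=|p(e^{it})|^2$ after fixing the constant; none of these ingredients appears in your write-up. As an aside, read as a blind proof of Lemma \ref{signchange} your argument is an attractive and essentially sound alternative to the paper's: the paper splits the frequencies into rational and irrational parts and uses equidistribution-type averages and a second-moment argument, whereas you use almost-periodicity plus simultaneous Diophantine approximation to reduce directly to the periodic Fej\'er--Riesz setting; but that is not the statement you were asked to prove, and it still presupposes the theorem under review.
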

\begin{cor} \label{cor1}
Let $w(t)=\sum_{n=-N}^Nc_ne^{itn}$ be a real-valued trigonometric polynomial. Suppose that either $2|c_N|>|c_0|$, or else that $2|c_N|=|c_0|$ and that $c_n\neq 0$ for some $1\leqslant n\leqslant N-1$. Then $w$ has a sign change in $[0,2\pi]$.
\end{cor}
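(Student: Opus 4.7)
The plan is to argue by contradiction using the Fej\'er-Riesz theorem stated just above. Suppose that $w$ has no sign change on $[0,2\pi]$. Since $w$ is real-valued and $2\pi$-periodic, this means $w\geqslant 0$ everywhere or $w\leqslant 0$ everywhere. Both alternatives are symmetric: the hypothesis only involves $|c_0|,|c_N|$ and the existence of a non-zero $c_n$ with $1\leqslant n\leqslant N-1$, and replacing $w$ by $-w$ negates every $c_n$ without changing moduli. So we may assume $w\geqslant 0$ on $\mathbf{R}$.

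By the Fej\'er-Riesz theorem there exists a polynomial $p(z)=\sum_{n=0}^N d_nz^n$ with $d_n\in\mathbf{C}$ such that $w(t)=|p(e^{it})|^2=p(e^{it})\overline{p(e^{it})}$. Expanding this product and matching Fourier coefficients with $w(t)=\sum_{n=-N}^Nc_ne^{itn}$ gives
\begin{equation*}
c_N = d_N\overline{d_0},\qquad c_0 = \sum_{n=0}^N|d_n|^2,\qquad c_k = \sum_{n=0}^{N-k}d_{n+k}\overline{d_n}\ \text{ for } 1\leqslant k\leqslant N-1.
\end{equation*}
In particular $c_0\geqslant 0$, so $|c_0|=c_0\geqslant |d_0|^2+|d_N|^2$, and by AM-GM this is at least $2|d_0||d_N|=2|c_N|$. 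Therefore the inequality $|c_0|\geqslant 2|c_N|$ must hold in any case, which already contradicts the first hypothesis $2|c_N|>|c_0|$.

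It remains to rule out the second hypothesis. In that case $2|c_N|=|c_0|$, so equality must hold at each step above: $c_0=|d_0|^2+|d_N|^2$ forces $d_n=0$ for all $1\leqslant n\leqslant N-1$, and $|d_0|^2+|d_N|^2=2|d_0||d_N|$ forces $|d_0|=|d_N|$. Thus $p(z)=d_0+d_Nz^N$, from which the formula for $c_k$ above yields $c_k=0$ for every $1\leqslant k\leqslant N-1$, contradicting the assumption that $c_n\neq 0$ for some such $n$.

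The only genuine input is the Fej\'er-Riesz factorization; once it is available the proof is a direct coefficient comparison and an application of AM-GM, with the only subtlety being the reduction to the case $w\geqslant 0$ and the equality analysis in the boundary case $2|c_N|=|c_0|$.
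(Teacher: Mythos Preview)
Your proof is correct and follows essentially the same approach as the paper: argue by contradiction, apply Fej\'er--Riesz to the non-negative function, compare the coefficients $c_0$ and $c_N$ via the inequality $|d_0d_N|\leqslant\tfrac12\sum|d_j|^2$, and then analyse the equality case to force $d_1=\dots=d_{N-1}=0$. The only cosmetic differences are that you invoke AM--GM explicitly and note the extra (but unnecessary) fact $|d_0|=|d_N|$ in the boundary case.
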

\begin{proof}
We argue by contradiction, assuming that there is a choice of sign $\epsilon\in\{\pm1\}$ such that function $ \epsilon w(t)$ is non-negative. By Fej\'er-Riesz, there then exist $d_j\in\mathbf{C}$ such that \begin{equation}
    \epsilon c_n = \sum_{j=0}^{N-n}d_j\overline{d_{n+j}}.
    \label{fejerrieszrep}
\end{equation}
Hence, $|c_N|=|d_0d_N|\leqslant \frac{1}{2}\sum_{j=0}^N|d_j|^2=\frac{|c_0|}{2}$ so the inequality $|c_N|>\frac{|c_0|}{2}$ is false and the assumptions of the lemma then imply that $c_n\neq 0$ for some $n\in[N-1]$. Since $|c_N|\geqslant\frac{|c_0|}{2}$ by assumption, this chain of inequalities also shows that $\sum_{j=1}^{N-1}|d_j|^2=0$. Hence, we deduce from \eqref{fejerrieszrep} that $c_n=0$ for all $1\leqslant n\leqslant N-1$, a contradiction.
\end{proof}
\begin{proof}[Proof of Lemma \ref{signchange}]
To apply Fej\'er-Riesz, we write \eqref{Hinlemma} as  $H(u)= w_1(u)+w_2(u)$, where 
\begin{align}
    w_1(u)= a_0+\sum_{j:\rho_j\in\mathbf{Q}}\left(a_je(\rho_ju)+\overline{a_j}e(-\rho_ju)\right)
\label{Hdecompo}
\end{align}
is a rescaled real-valued trigonometric polynomial, and \begin{align*}
w_2(u)=\sum_{j:\rho_j\notin\mathbf{Q}}\left(a_je(\rho_ju)+\overline{a_j}e(-\rho_ju)\right). 
\end{align*}
By the assumptions of Lemma \ref{signchange}, we either have that $2|a_l|> |a_0|$, or else that $2|a_l|=|a_0|$ and $a_j\neq 0$ for some $j \in[l-1]$. We consider two cases. In the first case we assume that $2|a_l|> |a_0|$, or else that $2|a_l|=|a_0|$ and $a_j \neq 0$ for some $j$ with $\rho_j\in \mathbf{Q}\setminus\{0,1\}$. In the second case, we have that $2|a_l|=|a_0|$, that $a_j = 0$ for all $j$ with $\rho_j\in \mathbf{Q}\setminus\{0,1\}$ and that 
$w_2$ has a non-zero coefficient. The assumptions of the first case allow us to apply Corollary \ref{cor1} and deduce that $w_1$ has a sign change, so there exist $\theta_{+},\theta_{-}$ such that $w_1(\theta_\pm)=\pm\kappa$, for some $\kappa>0$. Note from the definition \eqref{Hdecompo} of $w_1$ that $w_1$ is $p$-periodic for some $p\in\mathbf{N}$. As $\sum_{m=1}^n e(p m\rho)= O_{n\to\infty}(1)$ for each fixed irrational $\rho$ and as all exponentials in $w_2$ have irrational frequencies, we get that 
\begin{equation*}
    \sum_{m=1}^nH(\theta_\pm +pm)=\sum_{m=1}^n\left(w_1(\theta_\pm+pm)+w_2(\theta_\pm+ pm)\right) = \pm \kappa n+O_{n\to\infty}(1)
\end{equation*} showing that $H(u)>\kappa/2$ and $H(u')<-\kappa/2$ for arbitrarily large $u,u'$. This implies the desired conclusion of Lemma \ref{signchange} that $H$ oscillates between $-\kappa/2$ and $\kappa/2$ infinitely often on $\mathbf{R}$. Now assume that we are in the second case. So $2|a_l|=|a_0|$, $a_j = 0$ for all $j$ with $\rho_j\in \mathbf{Q}\setminus\{0,1\}$, and $w_2$ has a non-zero coefficient. Note that in this case, \eqref{Hdecompo} becomes $w_1(u)= a_0+a_le(u)+\overline{a_l}e(-u)$ and as $2|a_l|=|a_0|$ it is clear that $w_1$ has a double root at $\theta = 1/2-\arg(a_l/a_0)/2\pi$. By a similar calculation as above, we therefore have
\begin{equation}
    \sum_{m=1}^nH(\theta+m) = O_{n\to\infty}(1)
    \label{boundedsums}
\end{equation} as $w_1$ is $1-$periodic. Consider now \begin{align*}
    w_2^2(u)=&\sum_{j,j':\rho_j,\rho_{j'}\notin\mathbf{Q}}\left(a_ja_{j'}e((\rho_j+\rho_{j'})u)+\overline{a_j}\overline{a_{j'}}e((-\rho_j-\rho_{j'})u)\right)\\
    +&\sum_{j,j':\rho_j,\rho_{j'}\notin\mathbf{Q}}\left(a_{j}\overline{a_{j'}}e((\rho_j-\rho_{j'})u)+\overline{a_j}a_{j'}e((\rho_{j'}-\rho_j)u)\right)
\end{align*} and as $0<\rho_j<1$ for all irrational $\rho_j$, the only terms in $w_2^2$ having frequencies that are integer multiples of $2\pi$ come from the diagonal terms $j=j'$ and some pairwise disjoint pairs $\{j,j'\}$ for which $\rho_j+\rho_{j'}=1$. So we can partition $\{j:\rho_j\notin \mathbf{Q}\}$ into a set $\Lambda_1$ of pairs $\{j,j'\}$ for which $\rho_j+\rho_{j'}=1$ and a set $\Lambda_2$ of the remaining $j$. As $w_1(\theta+m)=0$, applying the basic bound that $\sum_{m=1}^ne(m\alpha)= O_{n\to\infty}(1)$ for each fixed $\alpha\in\mathbf{R}\setminus{\mathbf{Z}}$ to the terms in $w_2^2$ shows that
\begin{align}
    \sum_{m=1}^nH(\theta+ m)^2 &= n\sum_{j:\rho_j\notin \mathbf{Q}}|a_j|^2+n\sum_{\{j,j'\}:\rho_j+\rho_{j'}=1}\left(a_ja_{j'}e(\theta)+\overline{a_j}\overline{a_{j'}}e(-\theta)\right)+O_{n\to\infty}(1)\nonumber\\
    &=n\sum_{j\in\Lambda_2}|a_j|^2+n\sum_{\{j,j'\}\in\Lambda_1}\left|a_je(\theta/2)+\overline{a_{j'}}e(-\theta/2)\right|^2+O_{n\to\infty}(1).
    \label{secondmomentG}
\end{align}
Hence, if $\Lambda_2$ is non-empty, or if $a_je(\theta/2)+\overline{a_{j'}}e(-\theta/2)\neq0$ for some $\{j,j'\}\in\Lambda_1$ then \eqref{secondmomentG} is $\gg n$ so that we can use the following second moment argument to show that $H(\theta+m)$ takes both positive and negative values for arbitrarily large $m$, implying the desired conclusion of Lemma \ref{signchange}. Indeed, if for a contradiction we assume that $H(\theta+m)$ has a constant sign for $m>n_0$, then 
\begin{align*}
  \sum_{m=1}^nH(\theta+m)^2 = O_{n\to\infty}(1)+O_{n\to\infty}\left(||H||_\infty \left|\sum_{n_0<m\leqslant n}  H(\theta+m)\right|\right) = O_{n\to\infty}(1)
\end{align*}
since $H$ is a finite sum of exponentials so bounded on $\mathbf{R}$ and by \eqref{boundedsums}, but this contradicts \eqref{secondmomentG} as $n\to\infty$. Finally, suppose that $\Lambda_2=\emptyset$ and that $a_je(\theta/2)+\overline{a_{j'}}e(-\theta/2)=0$ for all $\{j,j'\}\in\Lambda_1$. Then \eqref{secondmomentG} is $O_{n\to\infty}(1)$. We repeat the calculation above for $H'(u)$ to obtain
\begin{align}
    \sum_{m=1}^nH'(\theta+m)^2 &= n\sum_{\{j,j'\}\in\Lambda_1}\left|2\pi i\rho_ja_je(\theta/2)-2\pi i\rho_{j'}\overline{a_{j'}}e(-\theta/2)\right|^2+O_{n\to\infty}(1)\gg n,
    \label{derivativebound}
\end{align}
where we used that $w_1'(\theta+m)=0$ as $w_1$ has a double root at $\theta$, and for the final inequality that $\rho_ja_je(\theta/2)-\rho_{j'}\overline{a_{j'}}e(-\theta/2)\neq 0$ if $a_je(\theta/2)+\overline{a_{j'}}e(-\theta/2)=0$. From \eqref{derivativebound}, we see that $H'(\theta+ m)\gg 1$ for arbitrarily large integers $m$, while the fact that \eqref{secondmomentG} is $O_{n\to\infty}(1)$ implies that $H(\theta+m)=o(1)$ as $m\to\infty$. Note that $H''(u)$ is a sum of a finite number of exponentials so bounded on $\mathbf{R}$. It is an elementary exercise in analysis that these three combined bounds on $H,H'$ and $H''$ imply that $H$ has infinitely many sign changes on $\mathbf{R}$; note that if $H'(\theta+m)> c$ and $||H''||_\infty<C$, then Taylor's theorem implies that $H(\theta+m+c/C)>H(\theta+m)+c^2/2C$ and $H(\theta+m-c/C)<H(\theta+m)-c^2/2C$. As we have shown that that there are arbitrarily large $m$ satisfying $H'(\theta+m)> c$, combined with the fact that $H(\theta+m)=o_{m\to\infty}(1)$ this shows that $H(\theta+m+c/C)>0$ while $H(\theta+m-c/C)<0$ for each such large $m$. Hence, $H(u)$ takes positive and negative values for arbitrarily large $u$, which is the desired conclusion of Lemma \ref{signchange}.
\end{proof}
\section{Proof for even degrees}
In this section we establish the result of Theorem \ref{maintheorem} for even degrees, so our goal is to show that $Z_\mathcal{L}(2N) \to\infty$ as $N\to\infty$. To prove this, recall from Sect. 3 that we assume for a contradiction that there is an infinite set of positive integers $\mathcal{N}\subset \mathbf{N}$ and for each $N\in\mathcal{N}$ a reciprocal Littlewood polynomial $P_{2N}$ of the form \eqref{P_Ndefi} and degree $2N$ which has at most $d$ unimodular roots. We write $Q_{N}=\sum_{n=0}^{N}q^{(N)}_nz^n$ for the corresponding polynomials that we defined in \eqref{Qdefi}, and recall the relation \eqref{QtoPeven} between $P_{2N}$ and $Q_N$. As we are assuming that each $P_{2N}$ has at most $d$ unimodular roots, we deduce that $\Re(Q_N(e^{i\theta})-\frac{1}{2}a_{N})$ also has at most $d$ roots in $[0,2\pi]$. Applying Lemma \ref{structurelem}, we obtain an infinite subset $\mathcal{N}'\subset \mathcal{N}$, constants $D,l=O_d(1)$ and $0=\rho_0<\dots<\rho_l=1$, and $l$ sequences 
\begin{equation}    \left(\varepsilon^{(j)}_m\right)_{m=1}^D\in\{\pm 1\}^D,
\label{epsilonvalues}
\end{equation} one for each $j\in[l]$, so that for each $N\in\mathcal{N}'$:
\begin{equation}
    Q_{N}(z) = \sum_{j=1}^{l}\left(\sum_{n\in \big(\rho_{j-1}N,\rho_jN\big)}\varepsilon^{(j)}_{n \,(\!\!\!\!\!\!\mod D)}z^n\right)+ E_N(z),
\label{firstsimpl}
\end{equation}
where the $E_N(z)$ are polynomials whose coefficients are $O(1)$ and with $o(N)$ terms. Here and in the rest of the paper, unless explicitly stated otherwise, the asymptotic notation describes the behaviour as $N\to\infty$. We rewrite this expression in a form that is more convenient. Note first that we may assume that $N\in\mathcal{N}'$ is sufficiently large since we only aim to find one polynomial $P_{2N}$ having more than $d$ unimodular roots in order to get the required contradiction. Let $r^{(N)}_j$ be the largest multiple of $D$ in $\big(\rho_{j-1}N,\rho_jN\big)$ for $j\in[l]$, and let $r^{(N)}_0=0$. Clearly $r^{(N)}_j=\rho_jN+O(1)$, so we can sum the geometric series in \eqref{firstsimpl} to get that for each $N\in\mathcal{N}'$:
\begin{align}
    Q_{N}(z) &= \frac{1}{z^D-1}\sum_{j=1}^{l}(z^{r^{(N)}_j}-z^{r^{(N)}_{j-1}})\left(\sum_{m=0}^{D-1}\varepsilon^{(j)}_{m}z^m\right)+ E'_N(z),
\nonumber\\
    &=\frac{1}{z^D-1}\sum_{j=1}^{l}A_j(z)(z^{r^{(N)}_j}-z^{r^{(N)}_{j-1}})+ E'_N(z), \label{furthersimpl+}
\end{align}
where again the $E'_N(z)$ are polynomials whose coefficients are $O(1)$ and with $o(N)$ terms, and where for $1\leqslant j \leqslant l$ we defined
\begin{equation}
    A_j(z) = \sum_{m=0}^{D-1}\varepsilon^{(j)}_mz^j.
    \label{A_jdefinitions}
\end{equation} An important consequence of Lemma \ref{structurelem} is that the $A_j$ are fixed polynomials, independent of $N\in\mathcal{N}'$. We need the following somewhat technical result to deduce that the functions $Q_{N}$ have large oscillations.
\begin{proposition} \label{prop1}
Let $l,D\in\mathbf{N}$, let $0=\rho_0<\dots<\rho_l=1$ and let $R\subset \mathbf{C}$ be finite. Suppose that $B_j(\theta)$ is a trigonometric polynomial of degree at most $D$ and with coefficients in $R$ for $0\leqslant j\leqslant l$. Define for each large $N\in\mathbf{N}$ the integer $r^{(N)}_j$ to be the largest multiple of $D$ in $(\rho_{j-1}N,\rho_jN)$, and let $r^{(N)}_0=0$ for all $N$. Define for each $N\in\mathbf{N}$ the function $f_N$ by 
\begin{align}
     f_N(\theta)\vcentcolon=\Re\left(\frac{1}{e(\theta D)-1}\sum_{j=1}^{l}B_j(\theta)\big(e(\theta r^{(N)}_j)-e(\theta r^{(N)}_{j-1})\big)\right).
\label{f_Ndefi}
\end{align} Suppose that there exists an integer $m$ such that
\begin{align}
    \left|B_l(m/D)\right|\geqslant\left|\Im\left(B_1(m/D)\right)\right|,
\label{prop1cond}
\end{align}
and if equality holds, then in addition 
\begin{align}
    B_{j+1}(m/D)\neq B_j(m/D)
\label{prop1cond2}
\end{align} for some $j$, $1\leqslant j\leqslant l-1$. Let $C>0$ be given. Then there exists a $c=c(C)>0$ such that for all sufficiently large $N$, the function $f_N$ oscillates between $-cN$ and $cN$ at least $C$ times on $[0,1]$.
\end{proposition}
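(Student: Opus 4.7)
The plan is to localize $f_N$ on the scale $1/N$ near the rational point $\theta_0 := m/D$, extract a limiting profile in the rescaled variable $u := N(\theta-\theta_0)$, and then apply Lemma \ref{signchange} to produce many sign changes. By the $1$-periodicity of $f_N$ we may assume $\theta_0 \in [0,1)$. Set $\beta_j := B_j(\theta_0)$ and substitute $\theta = \theta_0 + u/N$. Since $m \in \mathbf{Z}$ and each $r^{(N)}_j \in D\mathbf{Z}$, we have $e(\theta D) = e(uD/N)$ and $e(\theta r^{(N)}_j) = e(u r^{(N)}_j/N)$, so the $\theta_0$-dependence disappears from the exponentials. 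The Taylor expansions $1/(e(uD/N) - 1) = N/(2\pi i D u) + O_M(1)$ (valid for $u \in [1,M]$), $B_j(\theta_0 + u/N) = \beta_j + O_M(1/N)$, and $e(u r^{(N)}_j/N) = e(u\rho_j) + O_M(1/N)$ (using $r^{(N)}_j = \rho_j N + O(1)$) combine to give
\begin{equation*}
f_N\!\left(\theta_0 + \tfrac{u}{N}\right) = \Re\!\left(\frac{N\, G(u)}{2\pi i D u}\right) + O_M(1) = \frac{N}{2\pi D} \cdot \frac{H(u)}{u} + O_M(1),
\end{equation*}
uniformly for $u \in [1, M]$, where $G(u) := \sum_{j=1}^l \beta_j\bigl(e(u\rho_j) - e(u\rho_{j-1})\bigr)$ and $H(u) := \Im G(u)$.

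A partial telescoping of $G$ yields $G(u) = \beta_l e(u) - \beta_1 + \sum_{j=1}^{l-1}(\beta_j - \beta_{j+1}) e(u\rho_j)$, so $H$ fits the template of Lemma \ref{signchange} with constant term $a_0 = -\Im(\beta_1)$, top-frequency coefficient of size $|a_l| = |\beta_l|/2$ at $\rho_l = 1$, and intermediate coefficients of sizes $|a_j| = |\beta_j - \beta_{j+1}|/2$ at the frequencies $\rho_j$. The inequality \eqref{prop1cond} translates exactly to $2|a_l| \geq |a_0|$, and the equality-case assumption \eqref{prop1cond2} to $a_j \neq 0$ for some $1 \leq j \leq l-1$, so the hypotheses of Lemma \ref{signchange} are satisfied.

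Applying Lemma \ref{signchange}, $H$ has infinitely many sign changes on $\mathbf{R}$. Inspecting its proof (in particular the step around \eqref{boundedsums}--\eqref{secondmomentG}, which produces arbitrarily large integers $m$ with $H(\theta + m) > 0$ and with $H(\theta + m) < 0$) shows that these sign changes can be taken at arbitrarily large positive $u$, with $|H(u)|$ bounded below by a fixed $\kappa > 0$ there. Given $C > 0$, choose $M = M(C) \geq 1$ and points $1 \leq u_1 < u_2 < \cdots < u_{C+1} \leq M$ at which $H(u_i)$ alternates in sign with $|H(u_i)| \geq \kappa$; since $u_i > 0$, the ratios $H(u_i)/u_i$ also alternate in sign and satisfy $|H(u_i)/u_i| \geq \kappa/M$. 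The local expansion then yields $f_N(\theta_0 + u_i/N) = \frac{N}{2\pi D} \cdot \frac{H(u_i)}{u_i} + O_M(1)$, so for $N$ large enough (in terms of $M$, hence of $C$) these values alternate in sign with magnitude at least $c N$, where $c := \kappa/(4\pi D M) = c(C) > 0$. Since $\theta_0 + u_i/N \in [0,1]$ for $N$ large, this exhibits at least $C$ oscillations of $f_N$ between $-cN$ and $cN$ on $[0,1]$.

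The main technical obstacle is the uniform Taylor expansion around the singularity of $1/(e(\theta D)-1)$ at $\theta = m/D$ and the bookkeeping of the error terms $O_M(1)$; once this is set up, the partial telescoping, the matching of coefficients to the hypotheses of Lemma \ref{signchange}, and the conversion of bounded-scale sign changes of $H$ into $N$-scale oscillations of $f_N$ are routine.
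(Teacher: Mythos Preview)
Your proof is correct and follows essentially the same route as the paper's: localize near $\theta_0=m/D$ on the scale $1/N$, extract the limiting profile $H(u)=\Im G(u)$ via the telescoped form $G(u)=\beta_l e(u)-\beta_1+\sum_{j=1}^{l-1}(\beta_j-\beta_{j+1})e(\rho_j u)$, and then invoke Lemma~\ref{signchange}. The only cosmetic difference is that you Taylor-expand $1/(e(\theta D)-1)$ directly and restrict to $u\in[1,M]$, whereas the paper first rewrites $f_N=\tfrac{1}{2\sin(2\pi D\theta)}\Im(\cdots)$ and then applies Lemma~\ref{lem2} (which is unnecessary here since the $B_j$ are $N$-independent); both lead to the same verification of the hypotheses of Lemma~\ref{signchange}.
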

Here, we say that a function $f$ oscillates between $C_1$ and $C_2$ at least $C$ times if there exist $\theta_1<\theta_2<\dots<\theta_{2C}$ such that $f(\theta_{2j-1})=C_1$ and $f(\theta_{2j})=C_2$ for all $j$. Before proving this proposition, we show how it may be used to deduce that $Z_{\mathcal{L}}(2N)\to \infty$ as $N\to \infty$.
\begin{proof}
Recall that we are supposing for a contradiction that there is an infinite set $\mathcal{N}\subset \mathbf{N}$ and for each $N\in\mathcal{N}$ a reciprocal Littlewood polynomial $P_{2N}$ of the form \eqref{P_Ndefi} with degree $2N$ having at most $d$ unimodular roots. By \eqref{Ptocosine} we have
\begin{align*}
    P_{2N}(e^{i\theta}) &= e^{i\theta N}\Re\left(2Q_{N}(e^{i\theta})-a_{N}\right),
\end{align*}
so to get a contradiction it suffices to prove that $\Re\left(2Q_{N}(e^{i\theta})-a_{N}\right)$ has more than $d$ roots for some large $N\in\mathcal{N}'$.
By \eqref{furthersimpl+}, there is an infinite subset $\mathcal{N}'\subset\mathcal{N}$ so that for each $N\in\mathcal{N}'$ we can write
\begin{align}
    \Re\left(2Q_{N}(e(\theta))-a_{N}\right) &= 2\Re\left(\frac{1}{e(\theta D)-1}\sum_{j=1}^{l}A_j(e(\theta))(e(\theta r^{(N)}_j)-e(\theta r^{(N)}_{j-1}))\right) +E'_N(\theta),
\label{Qevenproof}
\end{align}
where $E'_N(\theta)$ is a trigonometric polynomial with maximum value $||E'_N||_\infty =o(N)$. We show that condition \eqref{prop1cond} holds with strict inequality for the polynomials $A_j$. Let $\zeta=e(1/D)$ be a primitive $D$th root of unity. By Parseval's identity and the definition \eqref{A_jdefinitions} of $A_j$, we have
\begin{align*}
    \sum_{m=0}^{D-1}\left|A_l(\zeta^{m})\right|^2&=D\sum_{m=0}^{D-1}\left|\varepsilon^{(l)}_m\right|^2 = D^2
\end{align*}
since $\varepsilon^{(l)}_{m}=\pm1$ by \eqref{epsilonvalues}. Similarly
\begin{align*}
    \sum_{m=0}^{D-1}\left|\Im\left(A_1(\zeta^{m})\right)\right|^2 
    &= D\sum_{m=1}^{D-1}\left(\frac{\varepsilon^{(1)}_{m}-\varepsilon^{(1)}_{D-m}}{2}\right)^2\leqslant D(D-1).
\end{align*}
Hence, there exists some $m$ for which $\left|A_l(\zeta^{m})\right|>\left|\Im\left(A_1(\zeta^{m})\right)\right|$ so that the condition of Proposition \ref{prop1} is satisfied. Proposition \ref{prop1} gives a $c=c(2d)>0$ such that for $N$ sufficiently large, the function
\begin{align*}
    \theta \mapsto \Re\left(\frac{1}{e(\theta D)-1}\sum_{j=1}^{l}A_j(e(\theta))(e(\theta r^{(N)}_j)-e(\theta r^{(N)}_{j-1}))\right)
\end{align*}
oscillates between $-cN$ and $cN$ at least $2d$ times. Since $||E'_N(\theta)||_\infty=o(N)$ as $N\to\infty,N\in\mathcal{N}'$, from \eqref{Qevenproof} we get the required contradiction that $\Re\left(2Q_{N}(e^{i\theta})-a_{N}\right)$ has at least $2d$ roots for  sufficiently large $N\in\mathcal{N}'$.
\end{proof}
The same argument also establishes Theorem \ref{cosinetheorem} if one uses the relation \eqref{Ptocosine} to reduce the problem for cosine polynomials to an equivalent problem for reciprocal polynomials of \emph{even} degree. The polynomials that one gets are reciprocal Littlewood polynomials of even degree up to a constant term that is $O(1)$, so this makes no difference in the proof above. We now move on to proving Proposition \ref{prop1}.
\begin{proof}[Proof of Proposition \ref{prop1}]
Let $m$ satisfy \eqref{prop1cond}, and if there is equality in \eqref{prop1cond} then let \eqref{prop1cond2} hold in addition. Let $C>0$ be given, and let $f_N$ be the function defined in \eqref{f_Ndefi} so that our aim is to show that the $f_N$ have at least $C$ oscillations of size $\gg N$ for large $N$. We will consider angles $\theta \in\left(\frac{m}{D}-\frac{C'}{N},\frac{m}{D}+\frac{C'}{N}\right)=\vcentcolon I_N$, where $C'>0$ is a sufficiently large constant (depending on $C$) to be chosen later.
First observe that we can rewrite
\begin{align*}
    f_N(\theta)= \frac{1}{2\sin(2\pi D\theta)}\Im\left(\sum_{j=1}^{l}B_j(\theta)\left(e(\theta (r^{(N)}_j-D/2))-e(\theta (r^{(N)}_{j-1}-D/2)\right)\right).
\end{align*}
For all $\theta\in I_n$ we have that $|2\sin(2\pi D\theta)|\ll C'/N$.\footnote{The dependence on $m,l,D$ and $R$ will be suppressed throughout this argument as they are fixed as far as Proposition \ref{prop1} is concerned.} Hence, if we define the functions
\begin{align*}
    H_N(\theta)=\Im\sum_{j=1}^{l}B_j(\theta)\left(e(\theta (r^{(N)}_j-D/2))-e(\theta(r^{(N)}_{j-1}-D/2))\right)
\end{align*}
then it suffices to show that there exists a constant $c>0$ such that the $H_N$ oscillate at least $C$ times between $-c$ and $c$ on the interval $I_N$, as this would imply that the $f_N$ have $C$ oscillations of size $\gg_{c,C'}N$ on $I_N$. Combining the terms in $H_N$ with equal frequencies, we can rewrite
\begin{align*}
    H_N(\theta)&=\Im\left(B_1(\theta)+B_l(\theta)e(\theta( r^{(N)}_l-D/2))\right)\\
    &+\Im\sum_{j=1}^{l-1}\left(B_{j}(\theta)-B_{j+1}(\theta)\right)e(\theta (r^{(N)}_j-D/2)).
\end{align*}
Recall that by the assumptions of Proposition \ref{prop1}, the $B_j$ are trigonometric polynomials of degree at most $D=O(1)$ with coefficients in a finite set $R$, and also
that $r^{(N)}_j-D/2=\rho_jN+o(N)$. Hence, we may apply Lemma \ref{lem2} with the constant sequence $(\gamma_N)_{N\in\mathbf{N}}=(m/D)$ to obtain an infinite set $\mathcal{N}'\subset\mathbf{N}$ and constants $b_j\in\mathbf{C}$ such that for each $N\in\mathcal{N}'$ and $u\in(-C',C')$ we have\footnote{It is not hard to see from the proof of Lemma \ref{lem2} that we may in fact take $\mathcal{N}'=\mathbf{N}$ since the trigonometric polynomials $B_j$ appearing in Proposition \ref{prop1} do not depend on $N$.}
 \begin{equation}
    H_N\left(\frac{m}{D}+\frac{u}{N}\right)=\Im\left(\sum_{j=0}^{l}b_je(\rho_j u)\right)+o_{C';N\to\infty}(1).
\label{H_Napprprop1}
 \end{equation}
Let us now define
\begin{align}
    H(u)&= \Im\left(\sum_{j=0}^{l}b_je(\rho_j u)\right)=\Im(b_1)+\frac{1}{2i}\sum_{j=1}^l(b_je(\rho_ju)-\overline{b_j}e(-\rho_ju))
    \label{Hpolyform}
\end{align}
to be the function of $u$ appearing in the right hand side above. Note that $H(u)$ is a fixed function independent of $N$, so we have reduced our task to showing that $H$ has at least $C$ sign changes on $\mathbf{R}$. Indeed, if we can show this then there exists some $C'$ and $c>0$ such that $H$ oscillates between $-c$ and $c$ at least $C$ times on the interval $(-C',C')$. Hence, from \eqref{H_Napprprop1} we would deduce the desired result that the $H_N(\theta)$ oscillate between $-c/2$ and $c/2$ at least $C$ times on $I_N$ for large $N$. To accomplish this final task, we simply check that the conditions of Lemma \ref{signchange} are satisfied by the function $H$. As the trigonometric polynomials $B_j$ do not depend on $N$ and as we defined $r^{(N)}_0=0$ in the statement of Proposition \ref{prop1}, we see from equation \eqref{b_jlem2size} in Lemma \ref{lem2} that the coefficients of $H$ satisfy
\begin{align*}
    b_0&=B_1(m/D) \\
    |b_j|&=|B_{j+1}(m/D)-B_j(m/D)| &&\text{for $1\leqslant j\leqslant l-1$,} \\
    |b_l|&=|B_l(m/D)|
\end{align*}
Hence, the assumptions \eqref{prop1cond}, $\eqref{prop1cond2}$ of Proposition \ref{prop1} imply precisely that $|b_l|\geqslant|\Im(b_0)|$ and that if equality holds, then $b_j\neq 0$ for some $j\in[l-1]$. By Lemma \ref{signchange} these conditions imply that $H(u)$ has infinitely many sign changes on $\mathbf{R}$, so this completes the proof of Proposition \ref{prop1}.
\end{proof}

\section{Proof for odd degrees}
We finish this paper by proving the remaining part of Theorem \ref{maintheorem}, namely that $Z_\mathcal{L}(2N+1)\to\infty$ as $N\to \infty$. Let us briefly recall the set up of our argument from Sect. 3. We assume for a contradiction that there is a fixed integer $d$ and an infinite set $\mathcal{N}\subset\mathbf{N}$ of odd integers such that for each $N\in\mathcal{N}$ there exists a reciprocal Littlewood polynomial $P_{N}$ of the form \eqref{P_Ndefi} and degree $N$ which has at most $d$ unimodular roots. The corresponding polynomials $Q_{N}=\sum_{n=0}^{N}q^{(N)}_nz^n$ that we defined in \eqref{Qdefiodd} have coefficients \begin{align}
q^{(N)}_n = \begin{cases}
      \pm 1 & \text{for $n$ odd} \\
      0 & \text{for $n$ even.}
    \end{cases}
\label{q_nodd}
\end{align} and recall that for odd $N$ we have the relation \eqref{QtoPodd} between $P_{N}$ and $Q_N$. As we are assuming that each $P_{N}$ has at most $d$ unimodular roots, we deduce that $\Re(Q_N(e^{i\theta}))$ has at most $2d$ roots in $[0,2\pi]$ for each $N\in\mathcal{N}$ so that we may apply Lemma \ref{structurelem}. We obtain an infinite subset $\mathcal{N}'\subset \mathcal{N}$, constants $D,l=O_d(1)$ and $0=\rho_0<\dots<\rho_l=1$, and a sequence 
\begin{equation}    \left(\varepsilon^{(j)}_m\right)_{m=1}^D\in\{-1,0,1\}^D
\label{epsilonvaluesodd}
\end{equation} for each $j\in[l]$, so that for each $N\in\mathcal{N}'$:
\begin{equation}
    Q_{N}(z) = \sum_{j=1}^{l}\left(\sum_{n\in \big(\rho_{j-1}N,\rho_jN\big)}\varepsilon^{(j)}_{n \,(\!\!\!\!\!\!\mod D)}z^n\right)+ E_N(z),
\label{firstsimplodd}
\end{equation}
where the $E_N(z)$ are polynomials whose coefficients are $O(1)$ and with $o(N)$ terms. Note from the definition of the $\varepsilon^{(j)}_m$ in \eqref{epsilondefilemma} and from \eqref{q_nodd} that the period $D$ is even and that for each $j$:
\begin{equation}
    \varepsilon^{(j)}_m = \begin{cases}
      \pm 1 & \text{for $m$ odd} \\
      0 & \text{for $m$ even.}
    \end{cases}
\label{oddepsilonvalue}
\end{equation}Let us again define for each large $N\in\mathbf{N}$ the integer $r^{(N)}_j$ to be the largest multiple of $D$ in $\big(\rho_{j-1}N,\rho_jN\big)$ for $j\in[l]$ , and let $r^{(N)}_0=0$. Clearly $r^{(N)}_j=\rho_jN+O(1)$, so we can sum the geometric series in \eqref{firstsimplodd} to get that for each large $N\in\mathcal{N}'$:
\begin{align}
    Q_{N}(z) &= \frac{1}{z^D-1}\sum_{j=1}^{l}(z^{r^{(N)}_j}-z^{r^{(N)}_{j-1}})\left(\sum_{m=0}^{D-1}\varepsilon^{(j)}_{m}z^m\right)+ E'_N(z),
\nonumber\\
    &=\frac{1}{z^D-1}\sum_{j=1}^{l}A_j(z)(z^{r^{(N)}_j}-z^{r^{(N)}_{j-1}})+ E'_N(z), \label{furthersimpl+odd}
\end{align}
where the $E'_N(z)$ are polynomials whose coefficients are $O(1)$ and with $o(N)$ terms, and where for $1\leqslant j \leqslant l$ we defined
\begin{equation}
    A_j(z) = \sum_{m=0}^{D-1}\varepsilon^{(j)}_mz^j.
    \label{A_jdefinitionsodd}
\end{equation} We show that the $Q_{N}$ with $N\in\mathcal{N}'$ either satisfy the conditions of Proposition \ref{prop1} so that we may conclude as in the proof for even degrees, or else that all of the sequences $(\varepsilon^{(j)}_m)$ with $j\in[l]$ are identical. By \eqref{furthersimpl+odd}, we have for $N\in\mathcal{N}'$ that
\begin{align}
    \Re\left(Q_{N}(e(\theta))\right) &= \Re\left(\frac{1}{e(\theta D)-1}\sum_{j=1}^{l}A_j(e(\theta))(e(\theta r^{(N)}_j)-e(\theta r^{(N)}_{j-1}))\right) +E'_N(\theta),
\label{Noddlargescale}
\end{align}
where $||E'_N||_\infty =o_{N\to\infty}(N)$. Parseval's identity and the definition \eqref{A_jdefinitionsodd} of the $A_j$ give
\begin{align*}
    \sum_{m=0}^{D-1}\left|A_l(\zeta^{m})\right|^2&=D\sum_{m=0}^{D-1}\left|\varepsilon^{(l)}_m\right|^2 = \frac{D^2}{2},
\end{align*}
as $\varepsilon_m^{(j)}=\pm 1$ for $m$ odd and $\varepsilon_m^{(j)}=0$ for $m$ even by \eqref{oddepsilonvalue}. Similarly,
\begin{align}
    \sum_{m=0}^{D-1}\left|\Im\left(A_1(\zeta^{m})\right)\right|^2 
    &= D\sum_{m=1}^{D-1}\left(\frac{\varepsilon^{(1)}_{m}-\varepsilon^{(1)}_{D-m}}{2}\right)^2\leqslant \frac{D^2}{2}.
\label{e_iimaginary}
\end{align}
Hence, there either exists some $m$ for which $\left|A_l(\zeta^{m})\right|>\left|\Im\left(A_1(\zeta^{m})\right)\right|$ so that the condition \eqref{prop1cond} of Proposition \ref{prop1} holds with strict inequality, or else equality must hold in \eqref{e_iimaginary} and condition \eqref{prop1cond} for all $m$. Now, if some two of the sequences $(\varepsilon^{(j)}_m)_{m=1}^{D}, (\varepsilon^{(j+1)}_m)_{m=1}^{D}$ are distinct, then \eqref{prop1cond2} is satisfied for some $m$. We conclude that unless the sequences $(\varepsilon_m^{(j)})$ are identical for all $j$, then the conditions of Proposition \ref{prop1} are satisfied and we deduce that \eqref{Noddlargescale} has more than $2d$ roots for large $N\in\mathcal{N}'$, which is the required contradiction. 

\bigskip

To finish our proof, it therefore only remains to show that the following assumptions lead to a contradiction. Assume that $\mathcal{N}\subset \mathbf{N}$ is an infinite subset and that $\{Q_N:N\in\mathcal{N}\}$ is a collection of polynomials satisfying the conclusion of Lemma \ref{structurelem} with all of the sequences $(\varepsilon^{(j)}_m)_{m=1}^D$ being identical, and such that $\Re(Q_N(e^{i\theta}))$ has at most $2d$ roots in $[0,2\pi]$ for each $N\in\mathcal{N}$. Recall from Sect. 3 that there do exist functions $h_m(\theta)=\Re\left(\sum_{n=0}^{2m}(-1)^{n+1}e^{i(2n+1)\theta}\right)$ with a periodic sequence of coefficients that have a bounded number of sign changes on $[0,2\pi]$ for all $m$. This shows that one cannot establish the conclusion of Proposition \ref{prop1} in this case, and hence we need to proceed by a different argument. Note that our assumptions in this final case of the argument correspond precisely to the assumptions of Corollary \ref{coroflem1} and we obtain the following from its conclusion. There exist constants $L=O_d(1)$ and $0\leqslant\tau_0\leqslant\dots\leqslant\tau_{L}\leqslant 1$, sequences $(\delta^{(j)}_m)_{m=1}^D$ for $j\in[L]$, and for each $N\in\mathcal{N}$ a collection of $L$ disjoint intervals $J^{(N)}_j\subset [N]$, $j\in [L]$, such that the following holds. 
\begin{enumerate}
    \item[(i)] The starting point and the length of each of the intervals $J^{(N)}_j$ is a multiple of $D$, and $\min J^{(N)}_j,\max J^{(N)}_j=\tau_j N+o(N)$ for each $j\in[L]$. In particular, the intervals have length $|J^{(N)}_j|=o_{N\to\infty,N\in\mathcal{N}}(N)$.
    \item[(ii)] For each $N\in\mathcal{N}$, we can write
\begin{equation}
    Q_{N}(z)= \left(\sum_{m=0}^{D-1} \varepsilon_mz^m\right)\frac{z^{N+1}-1}{z^D-1}+\sum_{j=1}^{L}\left(\sum_{m\in J^{(N)}_j}\delta^{(j)}_{m\,(\!\!\!\!\!\!\mod D)}z^m\right).
\label{Q_Ngeom}
\end{equation}
\end{enumerate}
We collect some important properties of the sequence $(\varepsilon_m)_{m=0}^{D-1}$. Recall that we may assume that equality holds in \eqref{e_iimaginary} and hence we deduce that the sequence $(\varepsilon_m)_{m=0}^{D-1}$ satisfies
\begin{equation}
    \varepsilon_m=-\varepsilon_{D-m},
\label{epsilonoddfunc}
\end{equation}
and from \eqref{oddepsilonvalue} we see that
\begin{equation}
    \varepsilon_m = \begin{cases}
      \pm 1 & \text{for $m$ odd} \\
      0 & \text{for $m$ even.}
    \end{cases}
\label{finalepsilonvalue}
\end{equation}
By comparing coefficients of the polynomials on both sides of \eqref{Q_Ngeom} and by using \eqref{q_nodd} and \eqref{finalepsilonvalue}, we obtain the following useful description of the sequences $(\delta^{(j)}_m)_{m=0}^{D-1}$ for $j\in[L]$:
\begin{equation}
    \delta^{(j)}_m\in\{-2\varepsilon_m,0\}.  
    \label{deltadefi}
\end{equation}
To get a contradiction, our goal is to show that $\Re(Q_{N}(e^{i\theta}))$ has more than $2d$ roots for some $N\in\mathcal{N}$ and we begin by rewriting \eqref{Q_Ngeom} in the form
\begin{align}
    \Re(Q_{N}(e^{i\theta}))
    = \frac{1}{2\sin(D\theta)}\Im\left(A_0(e^{i\theta})\left(e^{iN\theta}-1\right)+\sum_{j=1}^{L}A^{(N)}_j(e^{i\theta})\right)
\label{1patternim}
\end{align}
where \begin{equation}
    A_0(z)\vcentcolon=\sum_{m=0}^{D-1} \varepsilon_mz^{m-D/2}\label{A_0defi}
\end{equation} is a fixed Laurent polynomial independent of $N$, and where for $j\in [L]$ we write
\begin{equation}
    A^{(N)}_j(z)\vcentcolon=\left(z^{\max J^{(N)}_j+1}-z^{\min J^{(N)}_j}\right)\sum_{m=0}^{D-1} \delta^{(j)}_m z^{m-D/2}.
\label{A_jdefi}
\end{equation}
We record some useful properties of the polynomials $A^{(N)}_j$. Observe first that for each $j\in [L]$, all terms in $A^{(N)}_j(z)$ have degree $\tau_j N+o(N)$ as $\min J^{(N)}_j,\max J^{(N)}_j=\tau_jN+o(N)$ by item (i) above  and because $D=O(1)$, and further observe that each $A^{(N)}_j$ consists of at most $2D=O(1)$ such terms. We remind the reader that for the rest of the paper, unless explicitly stated otherwise, the asymptotic notation describes the behaviour as $N\to\infty$. Our next step is to group together the terms in \eqref{1patternim} whose degrees have asymptotically the same size. Hence, let $\rho_1,\dots,\rho_{l-1}$ be the distinct values in the sequence $\tau_0,\dots,\tau_{L}$ other than $0,1$ and let $\rho_0=0,\rho_{l}=1$. After combining terms in \eqref{1patternim} with equal $\tau_j$, it clearly suffices to show that the following function has more than $2d+2D$ roots for some $N\in\mathcal{N}$:
\begin{align}
    H_N(\theta) =\Im\left(\sum_{j=0}^{l}B^{(N)}_j(\theta)e(\rho_jN\theta)\right),  
\label{H_kwithB}
\end{align}
where 
\begin{align}
    B^{(N)}_j(\theta)=\begin{cases}
    A_0(e(\theta)) +e(-N\theta)\sum_{m:\tau_m=1}A^{(N)}_m(e(\theta))&\text{if } j=l,\\
    e(-\rho_jN\theta)\sum_{j:\tau_m=\rho_j}A^{(N)}_m(e(\theta)) &\text{if } 1\leqslant j\leqslant l-1,\\
    -A_0(e(\theta))+\sum_{m:\tau_m=0}A^{(N)}_m(e(\theta)) &\text{if } j=0.
    \end{cases}
\label{B_jdefi}
\end{align}
We see from \eqref{A_jdefi} that each of the $B^{(N)}_j(\theta)$ has at most $2D(L+1)=O(1)$ terms and since we noted that each term in $A^{(N)}_j$ has degree $\tau_j N+o(N)$, we obtain the crucial fact that the $B^{(N)}_j$ have degree $o(N)$. Having written $H_N$ in this way, it is clear from the properties of the $B^{(N)}_j$ that we just stated that the conditions of Lemma \ref{lem2} are satisfied. By applying Lemma \ref{lem2} with a sequence $(\gamma_N)$ with $\gamma_N\in [0,1]$ and a constant $C>0$, we obtain an infinite subset $\mathcal{N}'\subset \mathcal{N}$ and constants $b_j\in\mathbf{C}$ so that for $u\in(-C,C)$ and $N\in\mathcal{N}'$: 
\begin{equation}
    H_N\left(\gamma_N+\frac{u}{N}\right)= \Im\left(\sum_{j=0}^{l}b_je(\rho_j u)\right)+o_{C;N\to\infty}(1).
\label{H_klocal}
\end{equation}
Note the important property that the $b_j$ are constants independent of $N$ and hence the first function on the right hand side is a fixed trigonometric function independent of $N$. Our strategy is to find a good choice of the sequence $\gamma_N$ so that this function will have many sign changes on $(-C,C)$, and in practice this will involve us finding $(\gamma_N)$ such that the coefficients $b_j$ satisfy the assumptions of Lemma \ref{signchange}. To continue, it is convenient to first handle the special case for which $B^{(N)}_l(\theta)=-B^{(N)}_0(\theta)$ in a separate lemma.
\begin{lemma} \label{lemtrivial}
Let $\mathcal{N}\subset \mathbf{N}$ be an infinite set, let $R\subset\mathbf{Z}$ be finite and let $l,D\in\mathbf{N}$. Suppose that $G_N(\theta),N\in\mathcal{N}$ are functions, that $0= \rho_0<\dots<\rho_l= 1$ and that for each $N\in\mathcal{N}$ there are $l+1$ trigonometric functions $B^{(N)}_j(\theta)$ with coefficients in $R$, with at most $D$ non-zero coefficients, and each having degree $\deg B^{(N)}_j = o(N)$ so that we can write for $N\in\mathcal{N}$:
    \begin{equation}
        G_N(\theta) = \Im\left(\sum_{j=0}^lB^{(N)}_j(\theta)e(\rho_jN\theta)\right).
    \label{G_Nlemmadefi}
    \end{equation} 
    If $B^{(N)}_{l}(\theta)=-B^{(N)}_0(\theta)$ are identical as functions of $\theta$ for infinitely many $N\in\mathcal{N}$, then for each constant $C$ there exists a function $G_N(\theta)$ having at least $C$ zeros.
\end{lemma}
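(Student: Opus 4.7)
By hypothesis, we restrict $\mathcal{N}$ to the still infinite subset on which $B^{(N)}_l(\theta) \equiv -B^{(N)}_0(\theta)$ as functions of $\theta$. Using the identity $1 - e(N\theta) = -2i\,e^{i\pi N\theta}\sin(\pi N\theta)$, the $j = 0$ and $j = l$ contributions in \eqref{G_Nlemmadefi} combine to give
\begin{equation*}
G_N(\theta) = -2\sin(\pi N\theta)\,\Phi_N(\theta) + \Psi_N(\theta),
\end{equation*}
where $\Phi_N(\theta) = \Re(e(N\theta/2) B^{(N)}_0(\theta))$ and $\Psi_N(\theta) = \sum_{j=1}^{l-1}\Im(B^{(N)}_j(\theta)\,e(\rho_jN\theta))$. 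If along an infinite sub-subsequence we have $B^{(N)}_j \equiv 0$ for every $j \in \{1,\dots,l-1\}$, then $\Psi_N \equiv 0$ and $G_N(k/N) = 0$ for each $k \in \{0,1,\dots,N\}$, giving $N+1 \geq C$ zeros as soon as $N \geq C$. Otherwise, pigeonholing over the finitely many choices of $j_0 \in \{1,\dots,l-1\}$ and then over the finitely many possible coefficient patterns of $B^{(N)}_{j_0}$ (the coefficients lie in the finite set $R$ and there are at most $D$ nonzero ones), we may assume that there is a fixed $j_0$ and fixed coefficients $c_1,\dots,c_{D_0} \in R \setminus \{0\}$ with $B^{(N)}_{j_0}(\theta) = \sum_{k=1}^{D_0}c_k\,e(n^{(N)}_{j_0,k}\theta)$ for all $N$ in an infinite subsequence.

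The plan is now to apply Lemma \ref{lem2} with a sequence $(\gamma_N)$ engineered so that the limit function satisfies the hypotheses of Lemma \ref{signchange}. By Parseval's theorem, $\int_0^1|B^{(N)}_{j_0}(\theta)|^2\,d\theta = \sum_k|c_k|^2 \geq 1$ since each $c_k$ is a nonzero integer; hence for each $N$ we can pick $\gamma_N \in [0,1]$ with $|B^{(N)}_{j_0}(\gamma_N)| \geq 1$. Applying Lemma \ref{lem2} to $H_N(\theta) = \sum_j B^{(N)}_j(\theta)\,e(\rho_jN\theta)$ with this choice of $(\gamma_N)$ yields an infinite subsequence $\mathcal{N}'$ and constants $b_j \in \mathbf{C}$ such that, for any $C' > 0$,
\begin{equation*}
G_N(\gamma_N + u/N) = \Im\left(\sum_{j=0}^l b_j\,e(\rho_j u)\right) + o_{C';N\to\infty}(1)
\end{equation*}
uniformly for $u \in (-C',C')$, with $b_j = \lim_{N\in\mathcal{N}'} B^{(N)}_j(\gamma_N)\,e(\rho_j N\gamma_N)$. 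By construction $|b_{j_0}| \geq 1$. Passing to a further subsequence so that $\lambda := \lim e(N\gamma_N)$ exists (necessarily on the unit circle), the identity $B^{(N)}_l = -B^{(N)}_0$ forces $b_l = -\lambda b_0$, so that $|b_l| = |b_0|$.

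Writing the limit function in the form of Lemma \ref{signchange} gives $a_0 = \Im(b_0)$ and $|a_j| = |b_j|/2$ for $j \geq 1$, so
\begin{equation*}
2|a_l| = |b_l| = |b_0| \geq |\Im(b_0)| = |a_0|.
\end{equation*}
The nonvanishing coefficient $a_{j_0} \neq 0$ ensures that Lemma \ref{signchange} applies in the equality case $\Re(b_0) = 0$ as well as when the inequality is strict, so the limit function has infinitely many sign changes on $\mathbf{R}$. Choosing $C'$ large enough that it has at least $C$ sign changes on $(-C',C')$ and invoking the uniform convergence above, $G_N$ itself has at least $C$ sign changes, and hence at least $C$ zeros, on $(\gamma_N - C'/N, \gamma_N + C'/N)$ for all sufficiently large $N \in \mathcal{N}'$. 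The main obstacle is the reduction in the first paragraph: ensuring via the pigeonhole and Parseval arguments that some intermediate $b_{j_0}$ remains nonzero is essential, because the assumption $B^{(N)}_l = -B^{(N)}_0$ delivers only the equality $|b_l| = |b_0|$ and not the strict inequality needed in the purely boundary case of Lemma \ref{signchange}.
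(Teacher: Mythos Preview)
Your proof is correct and follows essentially the same approach as the paper's: both split into the trivial case where all intermediate $B^{(N)}_j$ vanish (giving roots at $k/N$), and the main case where Parseval supplies $\gamma_N$ with $|B^{(N)}_{j_0}(\gamma_N)|\geq 1$, after which Lemma~\ref{lem2} and Lemma~\ref{signchange} finish the job via $|b_l|=|b_0|\geq|\Im(b_0)|$ and $b_{j_0}\neq 0$. Two of your steps are harmless but unnecessary: the pigeonholing over coefficient patterns of $B^{(N)}_{j_0}$ is never used downstream (only the fact that $B^{(N)}_{j_0}\not\equiv 0$ is needed for Parseval), and the extraction of $\lambda=\lim e(N\gamma_N)$ can be avoided since $|b_l|=\lim|B^{(N)}_l(\gamma_N)|=\lim|B^{(N)}_0(\gamma_N)|=|b_0|$ follows directly from \eqref{b_jlem2size}.
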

\begin{proof}
    We split the argument into two cases. In the first case, we assume that there is an infinite set $\mathcal{M}\subset\mathcal{N}$ so that $B^{(N)}_{l}(\theta)=-B^{(N)}_0(\theta)$ for all $N\in\mathcal{M}$ and in addition that each of the other trigonometric polynomials $B^{(N)}_j$ with $1\leqslant j\leqslant l-1$ is identically $0$. Then from \eqref{G_Nlemmadefi} we get that $G_N(\theta)= \Im\left(-B^{(N)}_{0}(\theta)e(N\theta)+B^{(N)}_0(\theta)\right)$ and it is clear that $\theta=n/N$ is a root for all $n\in[N]$ so that we obtain arbitrarily many roots for large $N\in\mathcal{M}$. In the second case, we may assume that there is an infinite set $\mathcal{M}\subset\mathcal{N}$ and an index $j\in[l-1]$ so that $B^{(N)}_{l}(\theta)=-B^{(N)}_0(\theta)$ and that $B^{(N)}_j$ is not identically $0$ for any $N\in\mathcal{M}$. Note that for every non-zero $B^{(N)}_j$ we have $\int_0^{1}|B^{(N)}_j(\theta)|^2\,d\theta \geqslant 1$ by Parseval's Theorem and as all its Fourier coefficients are in $R\subset \mathbf{Z}$ by assumption. It follows that for each $N\in\mathcal{M}$ there exists a $\gamma_N\in[0,1]$ so that $|B^{(N)}_j(\gamma_N)|\geqslant 1$. The assumptions of Lemma \ref{lemtrivial} imply all of the assumptions of Lemma \ref{lem2} so we may apply it with the sequence $(\gamma_N)_{N\in\mathcal{M}}$ to obtain the following. There exists an infinite subset $\mathcal{N}'\subset\mathcal{M}$ and constants $b_m\in\mathbf{C}$ for $m\in\{0,1,\dots,l\}$ such that for $u\in(-C,C)$ and $N\in\mathcal{N}'$:
    \begin{equation}
    G_N\left(\gamma_N+\frac{u}{N}\right) = \Im\left(\sum_{m=0}^{l}b_me(\rho_m u)\right)+o_{C;N\to\infty}(1)
\label{lem4local}
    \end{equation}
and note that by \eqref{b_jlem2size} we can determine the size of the constants $b_m$ using
\begin{equation*}
\lim_{N\to\infty,N\in\mathcal{N}'}\left|B^{(N)}_m(\gamma_N)\right|=|b_m|.
\end{equation*}
In particular, as $B^{(N)}_l(\theta)=-B^{(N)}_0(\theta)$ by assumption we deduce that $|b_l|\geqslant|b_0|$ and as we chose the $\gamma_N$ so that $|B^{(N)}_j(\gamma_N)|\geqslant 1$ we also have that $b_j\neq 0$. Hence, we can apply Lemma \ref{signchange} to deduce that the first function on the right hand side of \eqref{lem4local} (which now is a fixed function of $u$, independent of $N$) has infinitely many sign changes on $\mathbf{R}$. If we choose $C$ to be a sufficiently large constant and then let $N\to\infty,N\in\mathcal{N}'$ it follows that $G_N$ has arbitrarily many roots for large $N$, as desired.
\end{proof}

To deal with the remaining and more substantial case in which $B^{(N)}_l(\theta)$ and $-B^{(N)}_0(\theta)$ are distinct functions of $\theta$ for all large $N\in\mathcal{N}$, we establish the following proposition.
\begin{proposition}
\label{prop2}
    Let $\mathcal{N}\subset\mathbf{N}$ be an infinite set, $D\in2\mathbf{N}$ and $C>0$. Suppose that $(\varepsilon_m)\in\{-1,0,1\}^D$ is a sequence which is not identically zero that satisfies $\varepsilon_m=-\varepsilon_{D-m}$, and define the sine polynomial \begin{equation}
    \label{a(t)defi}
        a(t)=i^{-1}\sum_{m=-D/2}^{D/2}\varepsilon_me(mt)=2\sum_{m=0}^{D/2}\varepsilon_m\sin(mt).
    \end{equation} Suppose that for each $N\in\mathcal{N}$ we have two sine polynomials that are not both identically zero:
    \begin{align}
        s_1^{(N)}(t)&= \sum_{m\in \Lambda_1^{(N)}} \delta_m\left(\sin mt-\sin((m-p_m^{(N)}D)t)\right), \label{s_jdefiinprop}\\
    s_2^{(N)}(t)&= \sum_{m\in \Lambda_2^{(N)}} \delta_m\left(\sin mt-\sin((m-p_m^{(N)}D)t)\right),\nonumber
 \end{align}
where $\Lambda_j^{(N)}$ are sets of positive integers of size at most $C$, where $\delta:\mathbf{N}\to\mathbf{Z}$ is given by
\begin{equation}
    \delta_m= -2\varepsilon_{m(\!\!\!\!\mod D)}
    \label{deltafound}
\end{equation} and where the $p_m^{(N)}$ are positive integers\footnote{It is a slight yet unimportant abuse of notation that we use $p_m^{(N)}$ in the expressions of both $s_1^{(N)}$ and $s_2^{(N)}$ since technically $p_m^{(N)}$ may also depend on whether $m\in\Lambda^{(N)}_1$ or $m\in \Lambda^{(N)}_2$.} satisfying $|m|\geqslant|m-p_m^{(N)}D|$ for each $m\in\Lambda^{(N)}_j$. Then there exists a $\kappa>0$ and an infinite subset $\mathcal{N}'\subset\mathcal{N}$ such that for each $N\in\mathcal{N}'$ we can find a $\gamma_N\in[0,1]$ satisfying 
\begin{equation}
    \left|a(\gamma_N)+s_{1}^{(N)}(\gamma_N)\right|\geqslant \left|a(\gamma_N)-s_{2}^{(N)}(\gamma_N)\right|+\kappa.
    \label{finaltaskprop}
\end{equation}
\end{proposition}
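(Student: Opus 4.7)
The plan is to analyze the real-valued trigonometric polynomial
\[
F^{(N)}(t) := \bigl(a(t) + s_1^{(N)}(t)\bigr)^2 - \bigl(a(t) - s_2^{(N)}(t)\bigr)^2 = \bigl(s_1^{(N)} + s_2^{(N)}\bigr)\bigl(2a + s_1^{(N)} - s_2^{(N)}\bigr)
\]
and show that $\sup_t F^{(N)}(t) \geq \kappa' > 0$ uniformly along an infinite subsequence $\mathcal{N}'$. Since $|a+s_1|(t) + |a-s_2|(t)$ is uniformly bounded by some $M = O(1)$ and one has the algebraic identity $|a+s_1|(t) - |a-s_2|(t) = F^{(N)}(t)/\bigl(|a+s_1|(t) + |a-s_2|(t)\bigr)$, any such bound yields the conclusion of Proposition \ref{prop2} with $\kappa := \kappa'/M$ and $\gamma_N$ any point where the sup is nearly attained.

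The first concrete step is to compute $\int_0^1 F^{(N)}(t)\,dt$ via Parseval. The key identity is $\int_0^1 a(t)\sin(2\pi k t)\,dt = \varepsilon_k$ for $|k| \in [1, D/2-1]$ (with $\varepsilon$ extended antisymmetrically) and $0$ otherwise. Since every $m \in \Lambda_j^{(N)}$ satisfies $m \geq D/2$ (because $|m| \geq |m - p_m^{(N)}D|$ with $p_m \geq 1$ forces $m \geq p_m D/2$), the term $\delta_m\sin(2\pi m t)$ in $s_j^{(N)}$ contributes $0$ to $\int a\,s_j^{(N)}$. Meanwhile, using $\delta_m = -2\varepsilon_{m \bmod D}$ and the congruence $m - p_m^{(N)}D \equiv m \pmod{D}$, a short calculation shows that $-\delta_m\sin(2\pi(m - p_m^{(N)}D)t)$ contributes exactly $+2$ to $\int a\,s_j^{(N)}$ whenever $|m - p_m^{(N)}D| \in [1, D/2-1]$ (using $\varepsilon_k^2 = 1$ for odd $k$ in this range) and $0$ otherwise. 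Hence $\int a\,s_j^{(N)}\,dt = 2 M_j^{(N)}$ with $M_j^{(N)} := \#\{m \in \Lambda_j^{(N)} : 0 < |m - p_m^{(N)}D| < D/2\}$. A parallel Parseval computation yields $\int (s_j^{(N)})^2\,dt = 4|\Lambda_j^{(N)}|$ up to bounded corrections from Fourier collisions among the at most $2|\Lambda_j^{(N)}|$ distinct frequencies in $s_j^{(N)}$; by passing to a subsequence on which the collision pattern stabilises, these corrections become fixed integers. Altogether, $\int_0^1 F^{(N)}\,dt = 4\bigl(M_1^{(N)} + M_2^{(N)} + |\Lambda_1^{(N)}| - |\Lambda_2^{(N)}|\bigr) + O(1)$, and whenever this expression is bounded below by a positive constant along an infinite subsequence, one immediately obtains $\sup_t F^{(N)} \geq \int_0^1 F^{(N)} > 0$, finishing the argument.

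The main obstacle is the complementary regime, in which $|\Lambda_2^{(N)}|$ significantly exceeds $|\Lambda_1^{(N)}| + M_1^{(N)} + M_2^{(N)}$ so that $\int F^{(N)}$ is non-positive and the $L^2$ mean-value argument is inconclusive. To handle it I would argue by contradiction: assuming $\sup_t F^{(N)}(t) \to 0$ along the subsequence, the polynomial $-F^{(N)}$ is asymptotically non-negative, so Fej\'er--Riesz (applied to a small non-negative perturbation) yields a factorisation $-F^{(N)} = |P^{(N)}(e(t))|^2 + o(1)$. Matching the leading and trailing Fourier coefficients of this factorisation against those of the product $-(s_1 + s_2)(2a + s_1 - s_2)$, and leveraging the rigid shift-by-$p_m^{(N)}D$ structure of the $s_j^{(N)}$ via the identity $\sin A - \sin B = 2\cos\bigl((A+B)/2\bigr)\sin\bigl((A-B)/2\bigr)$, I would aim to force the coefficients of the $s_j^{(N)}$ to vanish in the limit, contradicting the hypothesis that $(s_1^{(N)}, s_2^{(N)}) \neq (0,0)$.
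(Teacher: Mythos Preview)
Your reduction to showing $\sup_t F^{(N)}(t)\geqslant\kappa'>0$ is correct, and the Parseval computation of $\int_0^1 F^{(N)}$ is essentially right (once you discard $m$ with $\delta_m=0$, so that the relevant $\varepsilon$'s are nonzero). This handles the case where $\int F^{(N)}$ stabilises to a positive value along a subsequence. But the complementary regime, which you acknowledge is the main obstacle, is not actually proved. Concretely: the frequencies $m\in\Lambda_j^{(N)}$ are unbounded as $N\to\infty$, so $F^{(N)}$ has degree tending to infinity, and the Fej\'er--Riesz factorisation $-F^{(N)}+o(1)=|P^{(N)}|^2$ involves polynomials $P^{(N)}$ of unbounded degree. ``Matching leading and trailing coefficients'' therefore does not reduce to a finite system you can solve, and there is no mechanism given for why the shift-by-$p_m^{(N)}D$ structure forces any coefficient to vanish. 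As written this is a statement of hope, not an argument. Note too that the complementary regime is genuinely nonempty: for instance $s_1^{(N)}\equiv0$ and $s_2^{(N)}\neq0$ with all $|m-p_m^{(N)}D|\geqslant D/2$ gives $\int F^{(N)}<0$, and nothing in your sketch addresses even this special case.

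The paper's proof is entirely different and does not use $F^{(N)}$ or Fej\'er--Riesz at all. It is a local, multi-scale argument: one sets $p=p(N)=\max_m p_m^{(N)}$ and looks at $t=\gamma_r+u/p$ for $\gamma_r=2\pi r/D$. A Parseval identity over $r\in\{0,\dots,D-1\}$ (applied to $a(\gamma_r)$ against the second derivative of a suitable block of $s_1+s_2$ at $\gamma_r$) locates an $r$ at which $|a(\gamma_r)|\gg1$ and the second Taylor coefficient of the relevant block is $\gg p^2$ \emph{with the same sign as $a(\gamma_r)$}. Crucially, each term $\delta_m(\sin mt-\sin((m-p_m D)t))$ vanishes at $\gamma_r$, so in the window $|t-\gamma_r|\ll1/p$ the functions $s_j^{(N)}$ are small and one can drop absolute values; the problem reduces to showing $s_1^{(N)}+s_2^{(N)}$ attains a value $\gg1$ with the sign of $a(\gamma_r)$ in that window. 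This is then finished by decomposing $s_1+s_2$ into blocks oscillating at well-separated scales and using a first/second moment argument over a sub-window. The positivity of the Parseval sum (which comes from $\delta_m=-2\varepsilon_{m\bmod D}$ and the condition $|m|\geqslant|m-p_m^{(N)}D|$) is what replaces your attempted global $\int F>0$; it is a local sign-matching device, not an $L^2$ estimate for $F$.
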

Before proving Proposition \ref{prop2}, let us show how it may be used to finish the proof. Let $\mathcal{N}\subset\mathbf{N}$ be the infinite set such that the functions $H_N$ in \eqref{H_kwithB} have at most $2d+2D$ roots for $N\in\mathcal{N}$. From Lemma \ref{lemtrivial} we deduce that $B^{(N)}_l(\theta)$ and $-B^{(N)}_0(\theta)$ are distinct functions of $\theta$ for all large $N\in\mathcal{N}$. We begin by checking that, for each large $N\in\mathcal{N}$,  we can write the functions $B^{(N)}_0,B^{(N)}_l$ from \eqref{B_jdefi} in the form
\begin{align*}
    \Im\left(B^{(N)}_l(\theta)\right)&=a(\theta)+s^{(N)}_1(\theta)\\
    \Im\left( B^{(N)}_0(\theta)\right)&=-a(\theta)+s^{(N)}_2(\theta),
\end{align*}
where $a(\theta)$ and $s^{(N)}_1(\theta),s^{(N)}_2(\theta)$ with $N\in\mathcal{N}$ are functions that satisfy the conditions of Proposition \ref{prop2}. From the definition of the $B^{(N)}_j$ in \eqref{B_jdefi} we see that
\begin{align*}
    B^{(N)}_l(\theta)&=A_0(e(\theta))+e(-N\theta)\sum_{j:\tau_j=1}A^{(N)}_j(e(\theta)),\\
    B^{(N)}_0(\theta)&=-A_0(e(\theta))+\sum_{j:\tau_j=0}A^{(N)}_j(e(\theta)).
\end{align*}
The definition \eqref{A_0defi} of $A_0$ and the fact that $\varepsilon_m=-\varepsilon_{D-m}$ by \eqref{epsilonoddfunc} imply that $\Im(A_0)=i^{-1}\sum_{m=-D/2}^{D/2}\varepsilon_me(mt)$, so $a(t)\vcentcolon=\Im(A_0(e(t))$ does indeed satisfy the assumptions of Proposition \ref{prop2}. We then take
\begin{align*}
    s^{(N)}_1(t)&=\Im \sum_{j:\tau_j=1}A^{(N)}_j(e(t))e(-Nt),\\
    s^{(N)}_2(t)&=\Im\sum_{j:\tau_j=0}A^{(N)}_j(e(t)).
\end{align*}
Recall the definition \eqref{A_jdefi} of the polynomials $A^{(N)}_j$ so as they have at most $2D=O(1)$ terms, as their coefficients satisfy \eqref{deltadefi} and as the lengths of the intervals $J^{(N)}_j$ are multiples of $D$ by item (i) on page \pageref{Q_Ngeom}, we see that $s^{(N)}_1,s^{(N)}_2$ consist of $O(1)$ terms of the form $-\delta_m(\sin mt-\sin(m-pD)t)$ where $|m|\geqslant |m-pD|$ and hence that they are sine polynomials of the desired form \eqref{s_jdefiinprop}. Finally, the functions $s^{(N)}_1,s^{(N)}_2$ are not both identically zero for any large
$N\in\mathcal{N}$ since we deduced from Lemma \ref{lemtrivial} that $B^{(N)}_l\neq -B^{(N)}_0$ for all large $N\in\mathcal{N}$. Hence, we may apply Proposition \ref{prop2} to deduce that there exists an infinite set $\mathcal{N}'\subset\mathcal{N}$ and a sequence $(\gamma_N)_{N\in\mathcal{N}'}$ such that \eqref{finaltaskprop} holds, which in this setting means that
\begin{equation}
\label{largevaluesB_0B_l} \left|\Im(B^{(N)}_l(\gamma_N))\right|\geqslant\left|\Im(B^{(N)}_0(\gamma_N))\right|+\kappa.
\end{equation} Similarly as in \eqref{H_klocal}, we may then apply Lemma \ref{lem2} with the functions $H_N(\theta),N\in\mathcal{N}'$ and the sequence $(\gamma_N)_{N\in\mathcal{N}'}$ provided by Proposition \ref{prop2} to obtain an infinite subset $\mathcal{N}''\subset\mathcal{N}'$ and constants $b_m\in\mathbf{C}$ for $m\in\{0,1,\dots,l\}$ such that for $u\in(-C,C)$ and $N\in\mathcal{N}''$ the functions $H_N$ in \eqref{H_kwithB} satisfy:
    \begin{equation}
    H_N\left(\gamma_N+\frac{u}{N}\right) = \Im\left(\sum_{m=0}^{l}b_me(\rho_m u)\right)+o_{C;N\to\infty}(1)
\label{finallocalversion}
    \end{equation}
and furthermore by \eqref{b_jlem2size} that
\begin{equation*}
\lim_{N\to\infty,N\in\mathcal{N}'}B^{(N)}_m(\gamma_N)e(\rho_mN\gamma_N)=b_m.
\end{equation*} In particular, since \eqref{largevaluesB_0B_l} holds for all $N\in\mathcal{N}'$ and as $\rho_0=0$, we get that $|b_l|\geqslant|\Im(b_0)|+\kappa$. Hence, the first function on the right hand side of \eqref{finallocalversion} (which is a trigonometric function of $u$, independent of $N$) satisfies the conditions of Lemma \ref{signchange}, so we may apply Lemma \ref{signchange} to deduce that it has infinitely many sign changes on $\mathbf{R}$. If we choose $C$ to be a sufficiently large constant and then let $N\to\infty,N\in\mathcal{N}''$ it follows from \eqref{finallocalversion} that $H_N$ has arbitrarily many roots for large $N\in\mathcal{N}''$, which is the required contradiction. Note that to obtain the crucial inequality $|b_l|>|\Im(b_0)|$ which allows us to use Lemma \ref{signchange}, we require the constant $\kappa>0$ appearing in \eqref{largevaluesB_0B_l} to be independent of $N$. Showing that such a $\kappa>0$ which is independent of $N$ exists is the main difficulty in the proof Proposition \ref{prop2}. It seems possible that Proposition \ref{prop2} is in fact a special case of the following question to which the author does not know the answer.
\begin{prob}
    Is it true that for any positive integer $k\in\mathbf{N}$, there exists a $\kappa=\kappa(k)>0$ such that the following holds? Let $s_1(t),s_2(t)$ be two distinct sine polynomials of the form $s_j(t)=\sum_{n\in\Lambda_j}\varepsilon_{n,j}\sin nt$ with $|\Lambda_j|\leqslant k$ terms and coefficients $\varepsilon_{n,j}=\pm 1$, then there exists a $\gamma\in[0,1]$ such that $|s_1(\gamma)|\geqslant|s_2(\gamma)|+\kappa.$
\end{prob}
Returning to the main argument, it only remains to prove Proposition \ref{prop2}. 
\begin{proof}[Proof of Proposition \ref{prop2}]
Our task consists of finding a constant $\kappa>0$ and an infinite subset $\mathcal{N}'\subset\mathcal{N}$ such that for each $N\in\mathcal{N}'$, there exists some $t=t(N)$ that satisfies
\begin{equation}
    \left|a(t)+s_{1}^{(N)}(t)\right|\geqslant \left|a(t)-s_{2}^{(N)}(t)\right|+\kappa.
    \label{finaltaskinproof}
\end{equation}
Let us define the function $p:\mathcal{N}\to\mathbf{N}$ by
\begin{equation}
    p=p(N)=\max\{p_m^{(N)}:m\in\Lambda_1^{(N)}\cup\Lambda_2^{(N)}\}.
    \label{pdefi}
\end{equation} As the sets $\Lambda_j^{(N)}$ have size at most $C$ for all $N\in\mathcal{N}$ by the assumptions of Proposition \ref{prop2}, we can use a compactness argument to find an infinite subset $\mathcal{N}_1\subset\mathcal{N}$ for which $\Lambda_1^{(N)},\Lambda_2^{(N)}$ have fixed size and such that all of the limits $\lim_{N\to\infty,N\in\mathcal{N}_1} \frac{p_m^{(N)}}{p(N)}$ converge to non-negative constants for $m\in\Lambda_1^{(N)}\cup\Lambda_2^{(N)}$. It will be technically convenient for later to get rid of the contribution from those $m\in \Lambda_1^{(N)}\cup\Lambda_2^{(N)}$ for which $p_m^{(N)}=o_{N\to\infty}(p)$, so let us define 
\begin{equation}
\label{Lambda^Ndefi}
    \Lambda^{(N)}\vcentcolon= \left\{m\in\Lambda_1^{(N)}\cup\Lambda_2^{(N)}:\lim_{N\to\infty,N\in\mathcal{N}_1}\frac{p_m^{(N)}}{p(N)}>0\right\}.
\end{equation} Throughout the proof, the limit $N\to\infty$ should always be taken in an appropriate infinite subset $\mathcal{N}_i\subset\mathcal{N}$ but we will not explicitly state this every time. We define the following truncated version of the functions in \eqref{s_jdefiinprop}:
\begin{align}
    s_\Lambda^{(N)}(t) \vcentcolon= &\sum_{m\in \Lambda_1^{(N)}\cap\Lambda^{(N)}} \delta_m\left(\sin mt-\sin((m-p_m^{(N)}D)t)\right)\label{s_Lambda}\\+&\sum_{m\in \Lambda_2^{(N)}\cap\Lambda^{(N)}} \delta_m\left(\sin mt-\sin((m-p_m^{(N)}D)t)\right)\nonumber
\end{align}
and let us call the first term on the right hand side $s_{1,\Lambda}^{(N)}(t)$ and the second $s_{2,\Lambda}^{(N)}(t)$. To find a $t$ for which \eqref{finaltaskinproof} holds, we will consider angles $t=\frac{2\pi r}{D}+\frac{u}{p(N)}$ where $u$ ranges over $(-c,c)$ for a constant $c$ to be chosen later. We will need the following lemma later to show that we may work with the functions $s^{(N)}_{j,\Lambda}(t)$ instead of $s^{(N)}_{j}(t)$.
\begin{lemma}
\label{s_jsmallneargamma}
Let $s^{(N)}_1,s^{(N)}_2$ be the sine polynomials in \eqref{s_jdefiinprop} and let $s^{(N)}_{1,\Lambda},s^{(N)}_{2,\Lambda}$ be their truncations that we defined in \eqref{s_Lambda}. Write $\gamma_r=\frac{2\pi r}{D}$ and define for $c>0$ the intervals $I_N(c)\vcentcolon=\left(\gamma_r-\frac{c}{p(N)},\gamma_r+\frac{c}{p(N)}\right)$ for $N\in\mathcal{N}_1$. Then for any given $\rho>0$, one can choose $c>0$ sufficiently small such that for all $N\in\mathcal{N}_1$ we have
\begin{equation}
\label{s_jsmall}
    \sup_{t\in I_N(c)}\left|s_j^{(N)}\left(t\right)\right|<\rho,
\end{equation}
for $j=1,2$. Furthermore, we have for all $t\in I_N(c)$ that 
\begin{align}
\label{lemsmallpart2}
    s_{j}^{(N)} \left(t\right)= s_{j,\Lambda}^{(N)}\left(t\right)+o_{c;N\to\infty}(1).
\end{align}
\end{lemma}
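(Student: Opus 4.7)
The plan is to exploit the factorization given by the product-to-sum identity
\[
\sin A - \sin B = 2\cos\left(\tfrac{A+B}{2}\right)\sin\left(\tfrac{A-B}{2}\right),
\]
which, writing $p = p_m^{(N)}$, lets me rewrite each summand in \eqref{s_jdefiinprop} as
\[
\sin mt - \sin((m-pD)t) = 2\cos\left(\left(m-\tfrac{pD}{2}\right)t\right)\sin\left(\tfrac{pDt}{2}\right).
\]
The key observation is that $\sin(pDt/2) = \sin(pr\pi) = 0$ at $t = \gamma_r = 2\pi r/D$, so $\gamma_r$ is a common zero of every summand of $s_1^{(N)}$ and $s_2^{(N)}$ irrespective of $m$ and $N$. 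This is exactly what motivates the choice of scale $1/p(N)$ for the neighborhood $I_N(c)$.

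To establish \eqref{s_jsmall}, I would substitute $t = \gamma_r + u/p(N)$ with $|u| < c$ and use $\sin(n\pi + x) = (-1)^n \sin x$ for integer $n$ to obtain $\sin(pDt/2) = \pm\sin(p_m^{(N)}Du/(2p(N)))$. Since $p_m^{(N)} \leqslant p(N)$ by the definition \eqref{pdefi} of $p(N)$, the elementary inequality $|\sin x|\leqslant|x|$ bounds each summand in modulus by
\[
2 \cdot \frac{p_m^{(N)} D|u|}{2 p(N)} \leqslant Dc.
\]
Multiplying by $|\delta_m|\leqslant 2$ (which holds by \eqref{deltafound} and \eqref{finalepsilonvalue}) and summing over at most $C$ indices $m\in\Lambda_j^{(N)}$ yields $|s_j^{(N)}(t)| \leqslant 2CDc$ on $I_N(c)$, which is less than $\rho$ as soon as $c$ is chosen smaller than $\rho/(2CD)$.

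For \eqref{lemsmallpart2}, observe that the difference $s_j^{(N)}(t) - s_{j,\Lambda}^{(N)}(t)$ is supported on indices $m\in\Lambda_j^{(N)}\setminus \Lambda^{(N)}$, and for these the defining property \eqref{Lambda^Ndefi} of $\Lambda^{(N)}$ gives $p_m^{(N)}/p(N)\to 0$ as $N\to\infty$ through $\mathcal{N}_1$. The bound from the previous paragraph then sharpens to
\[
\left|\delta_m\bigl(\sin mt - \sin((m-p_m^{(N)}D)t)\bigr)\right| \leqslant 2 D c \cdot \frac{p_m^{(N)}}{p(N)},
\]
which is $o_{c;N\to\infty}(1)$ term by term, and hence also as a sum over the at most $2C$ relevant values of $m$.

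I do not foresee a genuine obstacle here: the lemma is a soft continuity statement at a common zero, and the only bookkeeping point is that the natural scale for the perturbation is $1/p(N)$, matched exactly to the largest frequency $p(N)D/2$ appearing in $\sin(p_m^{(N)}Dt/2)$. The inequality $p_m^{(N)}\leqslant p(N)$ is what makes the estimate uniform over $N\in\mathcal{N}_1$, and the vanishing of $p_m^{(N)}/p(N)$ for $m\notin\Lambda^{(N)}$ is what allows one to safely discard those terms in passing from $s_j^{(N)}$ to $s_{j,\Lambda}^{(N)}$.
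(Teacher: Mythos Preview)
Your argument is correct and is essentially the same as the paper's: the paper uses the Lipschitz bound $|\sin\alpha-\sin\beta|\leqslant|\alpha-\beta|$ together with the periodicity $\sin((m-p_m^{(N)}D)t)=\sin(mt-p_m^{(N)}Du/p(N))$ at $t=\gamma_r+u/p(N)$, which is precisely the content of your product-to-sum factorization and the observation that $\sin(p_m^{(N)}D\gamma_r/2)=0$. Both routes lead to the same termwise bound $O(p_m^{(N)}|u|/p(N))$, from which \eqref{s_jsmall} and \eqref{lemsmallpart2} follow exactly as you describe.
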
 
\begin{proof}
We use the bound $|\sin \alpha-\sin\beta|\ll |\alpha-\beta|$ to deduce for $t=\frac{2\pi r}{D}+\frac{u}{p}$ with $u\in(-c,c)$ that 
\begin{align}
\label{ineqinlemms_j}
    \left|\sin mt-\sin((m-p_m^{(N)}D)t)\right|\ll p_m^{(N)} |u|/p(N)\leqslant c,
\end{align} 
where we used the definition \eqref{pdefi} of $p=p(N)$ for the final inequality. Since each sine polynomial $s^{(N)}_j$ is a sum of at most $C$ terms of the form $\delta_m(\sin mt-\sin((m-p_m^{(N)}D)t))$ by the assumption of Proposition \ref{prop2}, this inequality implies that \eqref{s_jsmall} holds when $c>0$ is chosen to be sufficiently small in terms of $\rho$. Further, since we have that $p_m^{(N)}=o(p(N))$ by definition \eqref{Lambda^Ndefi} when $m\notin \Lambda^{(N)}$, the second part of Lemma \ref{s_jsmallneargamma} follows immediately from the first inequality in \eqref{ineqinlemms_j}.
\end{proof}

The next step in our approach is to find a useful decomposition of $s_\Lambda^{(N)}$. Recall that $\Lambda^{(N)}$ is a finite set of fixed size $\lambda$ for all $N\in\mathcal{N}_1$ and let us order it as $\Lambda^{(N)}=\{m_1^{(N)}<\dots<m_{\lambda}^{(N)}\}$. We may then restrict to an infinite subset $\mathcal{N}_2\subset\mathcal{N}_1$ such that each of the limits \begin{equation*}
     \lim_{N\to\infty,N\in\mathcal{N}_2}\frac{m_{j}^{(N)}-m_{j-1}^{(N)}}{p(N)}
\end{equation*} converges for $2\leqslant j\leqslant \lambda$, either to a non-negative constant or to $+\infty$. Our aim is to partition the set $\Lambda^{(N)}$ into maximal sets whose elements all differ by $O(p(N))$. To achieve this, let $m_1^*(N)<\dots<m_{\lambda'}^*(N)$ be all of the $m_j^{(N)}\in\Lambda^{(N)}$ for which $\lim_{N\to\infty,N\in\mathcal{N}_2}\frac{m_{j}^{(N)}-m_{j-1}^{(N)}}{p}=\infty$, and we then partition $\Lambda^{(N)}$ into the sets \begin{equation}
\label{firstpartdefi}
     \Lambda^{(N,j)}\vcentcolon=\{m\in\Lambda^{(N)}:m_j^*(N)\leqslant m<m_{j+1}^*(N)\}.
\end{equation} 
Note that these $\Lambda^{(N,j)}$ indeed satisfy the desired property that $m-m'=O(p(N))$ if $m,m'\in\Lambda^{(N,j)}$ are in the same part of the partition, and that $|m-m'|/p(N)\to\infty$ if $m\in\Lambda^{(N,j)}$ and $m'\in\Lambda^{(N,j')}$ with $j\neq j'$. Hence, for each $N\in\mathcal{N}_2$ we may equivalently define the parts of the partition by
\begin{equation}
     \Lambda^{(N,j)}\vcentcolon=\{m\in\Lambda^{(N)}:|m-m_j^*(N)|=O(p)\}.
\label{Lambdapart}
\end{equation} 
We may now obtain the useful decomposition of the function $s_\Lambda^{(N)}$ in \eqref{s_Lambda} by combining the terms coming from those $m$ in the same $\Lambda^{(N,j)}$ for each $j$, $1\leqslant j\leqslant \lambda'$. First however, we note that it will be a technical convenience to use the integers $n_j(N)$ instead of the $m_j^*(N)$ in the decomposition, where we define $n_j(N)$ to be the largest multiple of $D$ less than $m_j^*(N)-p(N)D$. We can now write down the desired decomposition of the functions $s^{(N)}_\Lambda$ for each $N\in\mathcal{N}_2$ and we collect its useful properties in the following lemma.
\begin{lemma}
\label{s_lambdadecolem}
        For each $N\in\mathcal{N}_2$, we can write the sine polynomial $s^{(N)}_\Lambda$ from \eqref{s_Lambda} in the form
\begin{equation}
    s_\Lambda^{(N)}(t)=\Im\left(\sum_{j=1}^{\lambda'} C_j^{(N)}(t)e^{in_j(N) t}\right)
\label{s_lambdadecomposition}
\end{equation}
where the functions $C_j^{(N)}(t)$ are trigonometric polynomials of the form
\begin{equation}
    C_j^{(N)}(t)=e^{-in_j(N)t}\sum_{m\in\Lambda^{(N,j)}}\delta_m\left(e^{imt}-e^{i(m-p_m^{(N)}D)t}\right).
\label{C_jdefi}
\end{equation}
Moreover, the decomposition \eqref{s_lambdadecomposition} has the following properties.
\begin{enumerate}
    \item[(i)] For each $N\in\mathcal{N}_2$, we have that
\begin{equation}
    n_j(N)= m^*_j(N)-p(N)D+O(1)
\label{n_jsize}
\end{equation} and that the partition $\Lambda^{(N)}=\cup_{j=1}^{\lambda'}\Lambda^{(N,j)}$ from \eqref{Lambdapart} can equivalently be defined by
\begin{equation}
    \Lambda^{(N,j)}=\{m\in\Lambda^{(N)}: |m-n_j(N)| = O(p(N))\},
\label{newpartdefi}
\end{equation} for $1\leqslant j\leqslant \lambda'$.
\item[(ii)] For each $N\in\mathcal{N}_2$, the $C_j^{(N)}$ are trigonometric polynomials of degree $O(p)$ with $O(1)$ terms for $j\in[\lambda']$.
\item[(iii)] The ratios between the factors $e^{n_j(N)t}$ oscillate on a much faster scale than the trigonometric polynomials $C^{(N)}_j(t)$, by which we mean that \begin{equation}
    \lim_{N\to\infty,N\in\mathcal{N}_2}\frac{n_{j+1}(N)-n_{j}(N)}{p(N)}=+\infty
\label{n_jscales}
\end{equation} for all $j\in[\lambda'-1]$.
\end{enumerate}
\end{lemma}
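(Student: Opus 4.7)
The plan is essentially to unfold the definitions introduced immediately before the lemma and verify the three claimed properties in order. For the decomposition \eqref{s_lambdadecomposition}, I would start from \eqref{s_Lambda}, rewrite each difference of sines as $\Im(e^{imt}-e^{i(m-p_m^{(N)}D)t})$, and group the resulting sum according to the partition $\Lambda^{(N)}=\bigcup_{j=1}^{\lambda'}\Lambda^{(N,j)}$ from \eqref{Lambdapart}. Factoring $e^{in_j(N)t}$ out of the $j$-th block yields the trigonometric polynomial $C_j^{(N)}(t)$ in exactly the form \eqref{C_jdefi}, and summing over $j$ gives \eqref{s_lambdadecomposition}.

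For property (i), since $n_j(N)$ is by definition the largest multiple of $D$ strictly less than $m_j^*(N)-p(N)D$ and $D=O(1)$ is fixed, we immediately get $n_j(N)=m_j^*(N)-p(N)D+O(1)$, which is \eqref{n_jsize}. Since $p(N)D=O(p(N))$, this also yields $n_j(N)=m_j^*(N)+O(p(N))$, so a triangle inequality shows that the two descriptions \eqref{Lambdapart} and \eqref{newpartdefi} of the partition agree: if $m\in\Lambda^{(N,j)}$, then $|m-m_j^*(N)|=O(p)$ implies $|m-n_j(N)|=O(p)$; conversely, since the centers $m_1^*(N)<\cdots<m_{\lambda'}^*(N)$ are separated by gaps whose ratios to $p(N)$ tend to infinity (which is property (iii)), for large $N$ no element $m\in\Lambda^{(N)}$ can be within $O(p)$ of two distinct $n_j(N)$'s, so the condition $|m-n_j(N)|=O(p)$ uniquely determines the block.

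For property (ii), the formula \eqref{C_jdefi} expresses $C_j^{(N)}$ as a sum of at most $2|\Lambda^{(N,j)}|\leq 2\lambda=O(1)$ complex exponentials, giving the $O(1)$-term bound. Their frequencies are $m-n_j(N)$ and $m-p_m^{(N)}D-n_j(N)$ with $m\in\Lambda^{(N,j)}$, each of size $O(p)$: the first by (i), and the second because $p_m^{(N)}\leq p(N)$ by definition \eqref{pdefi} while $D$ is fixed. Hence $\deg C_j^{(N)}=O(p)$. For property (iii), by \eqref{n_jsize} we have $n_{j+1}(N)-n_j(N)=m_{j+1}^*(N)-m_j^*(N)+O(1)$. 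By the construction of the $m_k^*(N)$ as those elements of $\Lambda^{(N)}$ where the gap to the predecessor in $\Lambda^{(N)}$ divided by $p$ diverges, and since the predecessor of $m_{j+1}^*(N)$ in $\Lambda^{(N)}$ lies in $\Lambda^{(N,j)}$ and hence is at least $m_j^*(N)$, we obtain $(m_{j+1}^*(N)-m_j^*(N))/p(N)\to+\infty$; dividing the previous identity by $p(N)$ then gives \eqref{n_jscales}.

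The main obstacle is nothing deep; this is purely a bookkeeping exercise from the construction. The only mildly subtle point is that (i) and (iii) are logically intertwined---the equivalence of the two partition descriptions in (i) relies on (iii), while (iii) in turn relies on the size estimate \eqref{n_jsize} that is part of (i)---so I would first prove \eqref{n_jsize}, then derive (iii) from the cluster construction, and only then conclude the partition equivalence using (iii). Since $D$ and $\lambda$ are fixed constants while only $p=p(N)$ is allowed to grow with $N$, all the implicit constants in the $O$-notation above are uniform in $N\in\mathcal{N}_2$, as needed.
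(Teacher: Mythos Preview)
Your proposal is correct and follows essentially the same approach as the paper's proof, which explicitly calls it ``merely a matter of unpacking definitions.'' Your extra care in noting the logical interplay between \eqref{n_jsize}, property (iii), and the partition equivalence in (i) is a slight elaboration on the paper's terser treatment, but the argument is the same in substance.
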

\begin{proof}
    The proof is merely a matter of unpacking definitions. The fact that we may write the sine polynomial $s^{(N)}_\Lambda$ from \eqref{s_Lambda} in the form \eqref{s_lambdadecomposition} is immediate from the definitions \eqref{C_jdefi} of the $C^{(N)}_j$ and the fact that $\Lambda^{(N)}=\cup_{j=1}^{\lambda'}\Lambda^{(N,j)}$. For item (i), recall that just above the statement of Lemma \ref{s_lambdadecolem}, we defined the integers $n_j(N)$ for $1\leqslant j\leqslant\lambda'$ to be the largest multiple of $D$ less than $m^*_j(N)-p(N)D$ so that \eqref{n_jsize} is clear. This also shows that $n_j(N)=m^*(N)+O(p(N))$ so that the observation that the partition of $\Lambda^{(N)}$ may be defined by \eqref{newpartdefi} is immediate from \eqref{Lambdapart}. For item (ii), we recall that $\mathcal{N}_2$ was in part defined so that $|\Lambda^{(N)}|=\lambda$ for all $N\in\mathcal{N}_2$ so it follows that each $C^{(N)}_j$ is a trigonometric polynomial with at most $\lambda=O_{N\to\infty}(1)$ terms. We further observe from its definition \eqref{C_jdefi} that all frequencies of $C^{(N)}_j$ are of the form $m-n_j(N)$ or $m-p^{(N)}_mD-n_j(N)$ for some $m\in\Lambda^{(N,j)}$. Hence, from \eqref{newpartdefi} and as $p^{(N)}_m=O(p(N)))$ holds trivially by the definition \eqref{pdefi} of $p$, we see that $\deg C^{(N)}_j(t)=O(p(N))$. Finally, item (iii) follows from the definitions of the integers $m^*_j(N)$ just before \eqref{firstpartdefi} which state that $|m^*_j(N)-m^*_{j-1}(N)|/p(N)\to\infty$ as $N\to\infty,N\in\mathcal{N}_2$ and the fact that $n_j(N)=m^*_j(N)+O(p)$ by \eqref{n_jsize}.
\end{proof}

Next, we analyse the trigonometric polynomials $C_j^{(N)}(t)$. Recall that we write $\gamma_r=\frac{2\pi r}{D}$ so from \eqref{C_jdefi} it is clear that the imaginary part of $C^{(N)}_j$ has the following second derivative at $\gamma_r$:
\begin{align*}
    \Im(C_j^{(N)})''\left(\gamma_r\right)&= \sum_{m\in \Lambda^{(N,j)}} -\delta_m\left((m-n_j(N))^2-(m-n_j(N)-p_m^{(N)}D)^2\right)\frac{\zeta^{mr}-\zeta^{-mr}}{2i}\\
    &=D\sum_{m\in \Lambda^{(N,j)}} -\delta_m p_m^{(N)}(2m-2n_j(N)-p_m^{(N)}D)\frac{\zeta^{mr}-\zeta^{-mr}}{2i}.
\end{align*} Note that by \eqref{n_jsize}, the integers $2m-2n_j(N)-p_m^{(N)}D$ appearing here are of magnitude $\Theta(p(N))$, for all $m\in\Lambda^{(N,j)}$. The function $a(t)$ defined in \eqref{a(t)defi} satisfies
\begin{align*}
    a(\gamma_r)=i^{-1}\sum_{m=-D/2}^{D/2}\varepsilon_m\zeta^{mr}=\sum_{m=-D/2}^{D/2}\varepsilon_m\frac{\zeta^{mr}-\zeta^{-mr}}{2i}.
\end{align*}
Since $\delta_m=-2\varepsilon_{m(\!\!\!\!\mod D)}$ and as $\varepsilon_m=-\varepsilon_{D-m}$ by the assumptions of Proposition \ref{prop2}, Parseval's identity then shows for each $j\in[\lambda']$ that
\begin{align}
\label{largederivative}
    &\sum_{r=0}^{D-1}a(\gamma_r)\Im(C_j^{(N)})''\left(\gamma_r\right)\\
    &=D^2\!\!\!\sum_{r\in\mathbf{Z}/D\mathbf{Z}}-\delta_m\varepsilon_m\!\!\left(\sum_{m\in\Lambda^{(N,j)}, m\equiv r(\!\!\!\!\!\mod D)}p_m^{(N)}(2m-2n_j(N)-p_m^{(N)}D)\!\right)\nonumber\\
    &=2D^2\!\!\!\sum_{m\in\Lambda^{(N,j)}}p_m^{(N)}(2m-2n_j(N)-p_m^{(N)}D)\!\nonumber\\
    &= \Theta(p(N)^2) \nonumber
\end{align}
where the lower bound in the final line follows as $p^{(N)}_m\gg p(N)$ for $m\in\Lambda^{(N)}$ by the definition \eqref{Lambda^Ndefi} of $\Lambda^{(N)}$ and from the fact that $2m-2n_j(N)-p_m^{(N)}D=\Theta(p)$ for $m\in\Lambda^{(N,j)}$ by \eqref{n_jsize}, and the corresponding upper bound from this same fact and as $\left|\Lambda^{(N,j)}\right|\leqslant\lambda=O(1)$. 

\medskip

On the other hand, note that we have the trivial upper bound $||\Im(C_j^{(N)})''||_\infty =O(p^2)$ as $C_j^{(N)}$ is a trigonometric polynomial of degree $O(p)$ with $O(1)$ terms by item (ii) in Lemma \ref{s_lambdadecolem}. We also have the trivial bound $||a(t)||_\infty=O(1)$ since the function $a(t)$ that we defined in \eqref{a(t)defi} is a fixed sine polynomial, independent of $N$. Combining these trivial upper bounds with the bound in \eqref{largederivative}, we therefore deduce that for each $N\in\mathcal{N}_2$, there exists some $r=r(N)$, $0\leqslant r\leqslant D-1$, such that \begin{align}
    |a(\gamma_r)|&\gg 1 \text{ and }\label{aCamplify}\\
    |\Im(C_{j}^{(N)})''(\gamma_r)|&\gg p(N)^2,\nonumber
\end{align} where the implied constants are independent of $N$, and moreover that $a(\gamma_r)$ and $\Im(C_{j}^{(N)})''(\gamma_r)$ have the same sign. Let us define for a constant $c>0$ the intervals 
\begin{align*}
    I_N(c)\vcentcolon=\left(\gamma_r-\frac{c}{p(N)},\gamma_r+\frac{c}{p(N)}\right)
\end{align*} for $N\in\mathcal{N}_2$. Now as $a(t)$ is a fixed sine polynomial independent of $N$ that satisfies $a(\gamma_r)\neq 0$ by \eqref{aCamplify}, we can choose a sufficiently small constant $c>0$, also independent of $N$, so that $\inf_{t\in (\gamma_r-c,\gamma_r+c)}|a(t)|\gg 1$ and hence we certainly have that $\inf_{t\in I_N(c)}|a(t)|\gg 1$ for all $N\in\mathcal{N}_2$ because $p(N)\geqslant 1$ since it is a positive integer by definition \eqref{pdefi}. By Lemma \ref{s_jsmallneargamma}, we can choose the constant $c>0$ sufficiently small so the functions $s_{j,\Lambda}^{(N)}$ will be arbitrarily small on the interval $I_N(c)$ and hence we may choose such a $c>0$, independent of $N$, so that all three of the functions $a(t), a(t)+s_{1,\Lambda}^{(N)}(t)$ and $a(t)-s_{2,\Lambda}^{(N)}(t)$ have the same sign for $t\in I_N(c)$, for each $N\in\mathcal{N}_2$. Hence, to prove the desired inequality \eqref{finaltaskinproof}, it suffices to show that there exists a $\kappa>0$, independent of $N$, such that the function $s_\Lambda^{(N)}(t)=s_{1,\Lambda}^{(N)}(t)+s_{2,\Lambda}^{(N)}(t)$ that we defined in \eqref{s_Lambda} takes the value $\kappa \sgn(a(\gamma_r))$ in the interval $I_N(c)$, as this would show precisely that 
\begin{equation*}
    |a(t)+s_{1,\Lambda}^{(N)}(t)|\geqslant |a(t)-s_{2,\Lambda}^{(N)}(t)|+\kappa
\end{equation*} for some $t=t(N)\in I_(c)$ since we just noted that $a, a+s_{1,\Lambda}^{(N)}$ and $a-s_{2,\Lambda}^{(N)}$ have the same sign on $I_N(c)$. From \eqref{lemsmallpart2} in Lemma \ref{s_jsmallneargamma}, we would then obtain the desired result that $|a(t)+s_{1}^{(N)}(t)|\geqslant |a(t)-s_{2}^{(N)}(t)|+\kappa/2$ for all large $N\in\mathcal{N}_2$. We state this final task in the proof of Proposition \ref{prop2} as a lemma.
\begin{lemma}
    Let $s_{\Lambda}^{(N)}$ be the function defined in \eqref{s_Lambda} for $N\in\mathcal{N}_2$, let $r=r(N)$ satisfy both of the bounds in \eqref{aCamplify}, and let $c>0$ be a fixed constant. Then there exists a fixed constant $\kappa>0$, independent of $N$, such that the following holds. For each sufficiently large $N\in\mathcal{N}_2$, there is a $t=t(N)\in(\gamma_r-c/p(N),\gamma_r+c/p(N))$ such that $\sgn(a(\gamma_r))\times s_{\Lambda}^{(N)}(t) \geqslant \kappa $.
\label{concludelemma}
\end{lemma}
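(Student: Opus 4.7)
My plan is to parametrize $t = \gamma_r + u/p(N)$ with $u \in (-c, c)$ and analyze the rescaled function $S_N(u) := s_\Lambda^{(N)}(\gamma_r + u/p(N))$. Using the sum-to-product identity $\sin mt - \sin((m - p_m D)t) = 2\cos((m - p_m D/2)t)\sin(p_m D t/2)$ and the fact that $\gamma_r D/2 \in \pi\mathbf{Z}$ (so that $\sin(p_m D (\gamma_r + u/p)/2) = (-1)^{p_m r}\sin(p_m D u/(2p))$), I rewrite
\[
S_N(u) = 2\sum_{j=1}^{\lambda'}\sum_{m\in\Lambda^{(N,j)}} \delta_m\,(-1)^{p_m^{(N)} r}\,\sin\!\Bigl(\tfrac{p_m^{(N)} D u}{2 p(N)}\Bigr)\cos\!\bigl(\theta_m^{(N)} + \omega_m^{(N)} u\bigr),
\]
where $\theta_m^{(N)} = (m - p_m^{(N)} D/2)\gamma_r$ and $\omega_m^{(N)} = (m - p_m^{(N)} D/2)/p(N)$. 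In this form the slow amplitude $\sin(p_m^{(N)} Du/(2p))$ vanishes at $u = 0$ and is $O(c)$ on $(-c, c)$, while the phase frequency $\omega_m^{(N)}$, comparable to $n_j(N)/p(N)$, can be arbitrarily fast.

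Next I pass, by compactness, to an infinite $\mathcal{N}_3 \subset \mathcal{N}_2$ on which $p_m^{(N)}/p(N) \to \pi_m$, the signs $(-1)^{p_m^{(N)} r}$ and phases $\theta_m^{(N)} \bmod 2\pi$ stabilize, the offsets $(m - n_j(N))/p(N)$ converge, and each $\omega_m^{(N)}$ either converges to a finite $\omega_m^*$ or diverges to $+\infty$. By Lemma~\ref{s_lambdadecolem}(iii), a finite limit $\omega_m^*$ can occur only for $m \in \Lambda^{(N,1)}$ and only when $n_1(N)/p(N)$ stays bounded. Split $S_N = S_N^{\mathrm{slow}} + S_N^{\mathrm{fast}}$ accordingly; $S_N^{\mathrm{slow}}$ converges uniformly on $(-c, c)$ to a fixed trigonometric polynomial $S_\infty(u)$ with $S_\infty(0) = 0$, while each term in $S_N^{\mathrm{fast}}$ has amplitude $O(c)$ and a phase frequency $\to \infty$.

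In the principal case $S_\infty \not\equiv 0$, I leverage the hypothesis via the explicit formula $S_N''(0) = p(N)^{-2}\sum_j\bigl(\Im(C_j^{(N)})''(\gamma_r) + 2n_j(N)\Re(C_j^{(N)})'(\gamma_r)\bigr)$: the sign-aligned contribution $\Im(C_j^{(N)})''(\gamma_r)/p^2$ has magnitude $\gg 1$ with $\sgn = \sgn a(\gamma_r)$, and the companion $(n_j/p)\Re(C_j^{(N)})'(\gamma_r)/p$ converges to a bounded explicit limit; combining these with $S_\infty(0) = 0$ and $S_\infty'(0) = O(1)$ and an elementary continuity argument extracts a fixed $u_0 \in (-c, c)$ and $\kappa_0 > 0$ with $\sgn(a(\gamma_r))\,S_\infty(u_0) \ge 2\kappa_0$. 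I then average $\sgn(a(\gamma_r))\,S_N$ against a nonnegative bump $g$ supported in a small neighborhood of $u_0$: the slow part converges to $\int \sgn(a(\gamma_r))\,S_\infty\,g \ge 2\kappa_0\int g$, while $\int \sgn(a(\gamma_r))\,S_N^{\mathrm{fast}}\,g \to 0$ by a Riemann--Lebesgue-style estimate using $\omega_m^{(N)} \to \infty$, so some $u = u(N) \in \supp(g)$ achieves $\sgn(a(\gamma_r))\,S_N(u) \ge \kappa := \kappa_0$ for all large $N \in \mathcal{N}_3$.

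The main obstacle is the fully-fast regime in which every $n_j(N)/p(N) \to \infty$, forcing $S_\infty \equiv 0$ and collapsing the averaging above; the pointwise hypothesis on $\Im(C_j^{(N)})''(\gamma_r)$ no longer has a limiting interpretation. In this regime the hypothesis still yields an $N$-uniform $L^2$ bound $\|S_N\|_{L^2((-c,c))}^2 \gtrsim 1$ (from orthogonality of the diagonal terms and the fact that $p_m/p$ remains bounded below on $\Lambda^{(N)}$), and my plan is to upgrade this to a \emph{signed} pointwise bound by replacing the fixed bump $g$ with an $N$-adapted oscillatory test function $g_N$ whose frequencies are matched to the dominant $\omega_m^{(N)}$, so that $\int S_N g_N$ retains only the diagonal second-moment contributions and whose amplitude is engineered (using the sign-aligned bound on $\Im(C_j^{(N)})''(\gamma_r)$ together with the linearized envelope $\sin(p_m Du/(2p)) \sim p_m Du/(2p)$) to yield a signed average $\gtrsim 1$ with sign $\sgn(a(\gamma_r))$. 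Converting this positive $N$-uniform averaged integral into the required pointwise lower bound on $S_N$, uniformly in $N$ and in the possibly wildly differing frequency profiles across the subsequence, is the hardest part of the argument.
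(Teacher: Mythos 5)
The decisive gap is your final regime, which is exactly the hard case of this lemma, and you leave it as a plan rather than a proof: when $n_1(N)/p(N)\to\infty$ (all phase frequencies fast, $S_\infty\equiv 0$), you propose to manufacture a \emph{signed} average $\int S_N g_N\gtrsim 1$ against an $N$-adapted oscillatory test function and then ``convert'' it to a pointwise signed bound, and you concede this conversion is unresolved. As stated it cannot work on its own: a large correlation of $S_N$ against a sign-varying $g_N$ gives no pointwise lower bound of a prescribed sign without further input. The paper's resolution of this case avoids producing the sign altogether. One first fixes $\gamma$ in the interval $J\subset(\gamma_r-c/p,\gamma_r+c/p)$, $|J|\gg 1/p$, on which $\Im(C_1^{(N)})>\kappa$ (from the Taylor step \eqref{taylor} and \eqref{aCamplify}), then chooses an intermediate scale $\omega(N)$ with $p=o(\omega)$ and $\omega=o\big(\min\{n_1,n_2-n_1,\dots\}\big)$ as in \eqref{omegadefi}, freezes the slowly varying coefficients $C_j^{(N)}(t)=C_j^{(N)}(\gamma)+o(1)$ on $I=(\gamma-1/\omega,\gamma+1/\omega)$, and computes two moments: $\int_I s_\Lambda^{(N)}=o(|I|)$ because every retained exponential has frequency $n_j+O(p)\gg\omega$ (see \eqref{firstmoment}), while $\int_I (s_\Lambda^{(N)})^2\gg|I|$ from the diagonal terms, which dominate since $|C_1^{(N)}(\gamma)|>\kappa$ (see \eqref{secondmoment}). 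Since $s_\Lambda^{(N)}$ is uniformly bounded, a function with mean $o(|I|)$ and second moment $\gg|I|$ cannot have (essentially) constant sign on $I$, so $s_\Lambda^{(N)}$ attains \emph{both} values $\pm\kappa'$ in $I\subset(\gamma_r-c/p,\gamma_r+c/p)$; in particular the value of the required sign is attained and no signed test function is needed. This combination of mean-smallness, $L^2$-largeness and boundedness is the missing idea, and without it the hardest case of the lemma is not proved.

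There is also a flaw in your ``principal case''. Your formula $S_N''(0)=p^{-2}\sum_j\big(\Im(C_j^{(N)})''(\gamma_r)+2n_j(N)\Re(C_j^{(N)})'(\gamma_r)\big)$ is correct (using $C_j^{(N)}(\gamma_r)=0$ and $D\mid n_j$), but from \eqref{C_jdefi} one has $\Re(C_j^{(N)})'(\gamma_r)=-D\sum_{m}\delta_mp_m^{(N)}\sin(2\pi mr/D)$, which is generically of size $\Theta(p)$, so even for the slow index the cross term $2n_1\Re(C_1^{(N)})'(\gamma_r)/p^2$ is of the same order as the sign-aligned term $\Im(C_1^{(N)})''(\gamma_r)/p^2\gg1$ and of uncontrolled sign (for the fast indices it need not even stay bounded). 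Hypothesis \eqref{aCamplify} therefore does not align the local quadratic behaviour of $S_N$, or of $S_\infty$, at $u=0$ with $\sgn a(\gamma_r)$, and your ``elementary continuity argument'' producing $u_0$ with $\sgn(a(\gamma_r))S_\infty(u_0)\geqslant 2\kappa_0$ does not follow. The paper sidesteps this by Taylor-expanding the \emph{unmodulated} coefficient $\Im(C_1^{(N)})$ alone to obtain the interval $J$ with \eqref{C_1large}, and only then integrating out the modulated and fast terms over $J$ (in the case $n_1=O(p)$ after absorbing $e^{in_1t}$ into $C_1^{(N)}$ in the decomposition \eqref{s_lambdadecomposition}); your bump-averaging of the fast part is fine, but the extraction of $u_0$ must be re-based on $\Im(C_1^{(N)})$ (or on a second-derivative bound re-derived for the folded coefficient), not on $S_N''(0)$.
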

\begin{proof}
We prove this under the assumption that $a(\gamma_r)>0$ as the case where $a(\gamma_r)<0$ is analogous. We also have that $\Im(C_j^{(N)})''(\gamma_r)>0$ as we noted after \eqref{aCamplify} that $a(\gamma_r)$ and $\Im(C_{j}^{(N)})''(\gamma_r)$ have the same sign. Note that by the definition \eqref{C_jdefi} in Lemma \ref{s_lambdadecolem}, the trigonometric polynomials $C_j^{(N)}$ with $N\in\mathcal{N}_2$ each consist of a finite number of terms which vanish at $\gamma_r=\frac{2\pi r}{D}$. We may therefore use Taylor's theorem to write for all $u\in(-c,c)$: 
\begin{align}
    \Im(C_j^{(N)})\left(\gamma_r+\frac{u}{p}\right)&=\frac{\Im(C_j)'(\gamma_r)}{p}u+\frac{\Im(C_j)''(\gamma_r)}{2p^2}u^2+O\left(u^3\right),\label{taylor}
\end{align}
where we used the trivial bound for the third derivative $\Im(C_j^{(N)})'''=O(p^3)$ that holds as $C_j^{(N)}$ is a finite sum of exponential whose frequencies are $O(p)$ by item (ii) in Lemma \ref{s_lambdadecolem}. Hence if we plug in the bound $\Im(C_j^{(N)})''(\gamma_r)\gg p^2$ that we obtained in \eqref{aCamplify}, then we deduce that there exists a $\kappa\gg_c 1$, independent of $N$, such that the function $\Im(C_j^{(N)})(\gamma_r+u/p)$ takes the value $\kappa$ for some $u\in(-c,c)$. In fact, we have proven the slightly stronger claim that for each $N\in\mathcal{N}_2$, there exists an interval $J_j=J_j(N)\subset (-\gamma_r-\frac{c}{p(N)},\gamma_r+\frac{c}{p(N)})$ of length $|J_j|\gg p(N)^{-1}$ such that $\Im(C_j^{(N)}(t))>\kappa$ for all $t\in J_j$. Let $J\subset (-\gamma_r-\frac{c}{p},\gamma_r+\frac{c}{p})$ be such an interval for $j=1$, so $|J|\gg p(N)^{-1}$ and we have for $t\in J$ that
\begin{equation}
    \Im(C_1^{(N)}(t))>\kappa.
\label{C_1large}
\end{equation}
We are now in a position to finish the argument, but we need to consider two cases based on whether $n_1(N)=O(p(N))$ or $p(N)=o(n_1(N))$ as $N\to\infty,N\in\mathcal{N}_2$, where we remind the reader that the $n_j(N)$ are integers such that the decomposition \eqref{s_lambdadecomposition} of $s^{(N)}_\Lambda$ holds. First, if $n_1(N)= O(p(N))$ then we note that we could have simply defined $n_1(N)=0$ and $C^{(N)}_1(t)=\sum_{m\in\Lambda^{(N,1)}}\delta_m\left(e^{imt}-e^{i(m-p^{(N)}_mD)t}\right)$ in the decomposition \eqref{s_lambdadecomposition} of $s_\Lambda^{(N)}$ and we check that all of the desired properties in items (i),(ii) and (iii) of Lemma \ref{s_lambdadecolem} still hold: for item (i) note that $\Lambda^{(N,1)}=\{m\in\Lambda:|m-n_1(N)|=O(p)\}=\{m\in\Lambda:|m|=O(p)\}$ as we are assuming that $n_1(N)=O(p)$, item (ii) is clear as the function $C^{(N)}_1$ above has degree $O(p)$ since we just noted that $m=O(p)$ for all $m\in\Lambda^{(N,1)}$, and item (iii) is trivial as could only have decreased $n_1(N)$. Hence, integrating the expression \eqref{s_lambdadecomposition} over $J=J(N)$ gives 
\begin{align*}
    \int_J s_\Lambda^{(N)}(t)\,dt &= \int_J \Im(C_1^{(N)}(t))\,dt+\sum_{j=2}^{\lambda'} \int_J\Im(C_j^{(N)}(t)e^{in_j(N)t})\,dt\\
    &= \int_J \Im(C_1^{(N)}(t))\,dt+O\left(\sum_{j=2}^{\lambda'} (n_j(N)+O(p))^{-1}\right)
\end{align*} where the bound in the second line follows from items (i) and (ii) in Lemma \ref{s_lambdadecolem} which show that the frequency of any term in $C_j^{(N)}(t)e^{in_j(N)t}$ is of size $n_j(N)+O(p)$. Recall now \eqref{n_jscales} which certainly implies that $p(N)=o(n_j(N))$ for $j\geqslant 2$ so that $n_j(N)^{-1}=o(p^{-1})$. Hence, as $J$ was chosen so that $|J|\gg p^{-1}$ and \eqref{C_1large} holds, the integral above is $>\kappa|J|+o(p(N)^{-1})\gg \kappa |J|$ as $N\to\infty,N\in\mathcal{N}_2$. This shows that for $N\in\mathcal{N}_2$ sufficiently large, there exists a $t\in J\subset(-\gamma_r-\frac{c}{p},\gamma_r+\frac{c}{p})$ such that $s_\Lambda(t)^{(N)}\gg \kappa$ which is the desired conclusion of Lemma \ref{concludelemma}.

\medskip

Finally, in the second case we may assume that $\frac{n_1(N)}{p(N)}\to \infty$ as $N\to \infty,N\in\mathcal{N}_2$. As we showed that there exists an interval $J$ satisfying \eqref{C_1large}, there certainly exists a $\gamma\in J\subset (-\gamma_r-\frac{c}{p},\gamma_r+\frac{c}{p})$ such that 
\begin{equation}
    |\Im(C_1^{(N)}(\gamma))|> \kappa. 
\label{gammalarge}
\end{equation}
Again recall that $p(N)=o(n_{j+1}(N)-n_{j}(N))$ for all $1\leqslant j\leqslant \lambda'-1$ by \eqref{n_jscales}, and as we are assuming that $p(N)=o(n_1(N))$ now, it is possible to define a variable $\omega=\omega(N)$ such that $p(N)=o(\omega)$ and 
\begin{align}
\omega(N) = o\big(\min\{n_1(N),n_2(N)-n_1(N),\dots,n_\lambda(N)-n_{\lambda-1}(N)\}\big).
\label{omegadefi}
\end{align}
As the trigonometric polynomials $C_j^{(N)}$ consist of $O(1)$ terms of degree $O(p(N))$ by item (ii) in Lemma \ref{s_lambdadecolem}, it follows that $\left|C_j^{(N)}(t)-C_j^{(N)}(\gamma)\right|\ll p(N)/\omega(N)=o_{N\to\infty}(1)$ holds uniformly for all $t=\gamma+\frac{u}{\omega(N)}$ where $u$ ranges over $(-1,1)$. Hence, for $u\in(-1,1)$ and $t=\gamma+\frac{u}{\omega(N)}$ we can rewrite \eqref{s_lambdadecomposition} as
\begin{equation}
    s_\Lambda^{(N)}(t)=\Im\left(\sum_{j=1}^{\lambda'} C_j^{(N)}(\gamma)e^{in_j t}\right)+o_{N\to\infty}(1)
\label{oscillatedecompo}
\end{equation}
where $o_{N\to\infty}(1)$ denotes a function that tends to $0$ as $N\to\infty$ uniformly for all $u\in(-1,1)$. Note now that the $C_j^{(N)}(\gamma)$ are fixed coefficients independent of $t$. Hence, if we integrate this expression over the interval $I=I(N)\vcentcolon=(\gamma-\frac{1}{\omega(N)},\gamma+\frac{1}{\omega(N)})$, we get
\begin{align}
    \int_I s_\Lambda^{(N)}(t)\,dt &= O\left(\sum_{j=1}^{\lambda'} n_j^{-1}\right)+o(|I|)=o(\omega^{-1})+o(|I|)=o(|I|)
\label{firstmoment}
\end{align}
as $N\to\infty$, and a similar calculation yields
\begin{align}
    \int_I \left(s_\Lambda^{(N)}(t)\right)^2\,dt &= |I|\left(\frac{1}{2}\sum_{j=1}^{\lambda'} |C_j^{(N)}(\gamma)|^2\right)+O(\max_{j>j'}(n_j-n_{j'})^{-1})+o(|I|)\nonumber\\
    &\gg |I|
\label{secondmoment}
\end{align}
as $N\to\infty$, where we used that $|I|=2\omega^{-1}$ so that $\max_{j>j'}(n_j-n_{j'})^{-1})=o(\omega^{-1})=o(|I|)$ by the definition \eqref{omegadefi} of $\omega$, and for the final inequality that $|\Im(C_1^{(N)}(\gamma))|> \kappa$ as we chose $\gamma$ to satisfy \eqref{gammalarge}. The following straightforward second moment then shows that there exists a $\kappa'\gg1$, independent of $N$, such that $s_\Lambda^{(N)}$ takes both the values $\pm\kappa'$ in $I(N)\subset(\gamma_r-\frac{c}{p(N)},\gamma_r+\frac{c}{p(N)})$ when $N\in\mathcal{N}_2$ is sufficiently large, which implies the desired result of Lemma \ref{concludelemma}. Indeed, note that if such a $\kappa'$ did not exist, then we could find functions $g^{(N)}(t)$ satisfying $\sup_I|g^{(N)}(t)|=o_{N\to\infty}(1)$ such that $f^{(N)}(t)\vcentcolon=\epsilon s_\Lambda^{(N)}(t)+g^{(N)}(t)$ is non-negative for all $t\in I$, for some $\epsilon\in\{\pm 1\}$. Then, combined with the fact that $\sup_I|g^{(N)}(t)|=o_{N\to\infty}(1)$ by definition, \eqref{firstmoment} shows that $\int_I f^{(N)} = o(|I|)$ while \eqref{secondmoment} gives $\int_I\left(f^{(N)}\right)^2\gg |I|$. However, note that the $f^{(N)}$ are uniformly bounded on $I$ for $N\in\mathcal{N}_2$ because the $g^{(N)}$ are uniformly bounded by definition and because $s_\Lambda^{(N)}$ is a trigonometric polynomial with $\lambda'=O(1)$ terms by \eqref{oscillatedecompo} and all these terms are $O(1)$ by item (ii) in Lemma \ref{s_lambdadecolem}. As $f^{(N)}$ is also non-negative by definition, we deduce that $\int_I\left(f^{(N)}\right)^2\ll \int_If^{(N)}$ as $N\to\infty,N\in\mathcal{N}_2$. This contradicts our estimates above and therefore finishes the proof of Lemma \ref{concludelemma};
    
\end{proof}
This completes the proof of Proposition \ref{prop2}.
\end{proof}

\bibliographystyle{plain}
\bibliography{referencesLittlewoodcosine}
\bigskip

\noindent
{\sc Mathematical Institute, Andrew Wiles Building, University of Oxford, Radcliffe
Observatory Quarter, Woodstock Road, Oxford, OX2 6GG, UK.}\newline
\href{mailto:benjamin.bedert@maths.ox.ac.uk}{\small benjamin.bedert@maths.ox.ac.uk}
\end{document}